\newcommand{\norm}[1]{\|#1\|} 
\newcommand{\langel}{\langle} 
\let\phi\varphi 
\DeclareMathOperator{\fix}{Fix}
\DeclareMathOperator{\sub}{Sub}
\newcommand*{\defeq}{\mathrel{\rlap{%
                     \raisebox{0.3ex}{$\m@th\cdot$}}%
                     \raisebox{-0.3ex}{$\m@th\cdot$}}%
                     =}
\newcommand*{\eqdef}{=\mathrel{\rlap{%
                     \raisebox{0.3ex}{$\m@th\cdot$}}%
                     \raisebox{-0.3ex}{$\m@th\cdot$}}%
                     }
\newcommand{\bs}[1]{\boldsymbol{#1}}
\newcommand{\ol}[1]{\overline #1}
\newcommand{\ul}[1]{\underline{\smash #1}}
\definecolor{mycolor}{rgb}{0.9,0.9,0.9}
\newcommand{\meinsubtext}{subtext}  
\theoremstyle{plain} 
\newtheorem{theorem}{Theorem}[section]
\newtheorem{lemma}[theorem]{Lemma}
\newtheorem{corollary}[theorem]{Corollary}
\newtheorem{proposition}[theorem]{Proposition}
\theoremstyle{definition} 
\newtheorem{definition}[theorem]{Definition}
\newtheorem{remark}[theorem]{Remark}
\newtheorem{rem}[theorem]{Remark}
\newtheorem{assumption}[theorem]{Assumption}
\newtheorem{assn}[theorem]{Assumption}
\g@addto@macro{\thm@space@setup}{\thm@headpunct{$\quad$}}
\renewcommand{\fnum@figure}{Fig. \thefigure}
\newcommand{\mip}{\,:\,}
\title{Doctoral Paper Functionals}
\author{Christoph Tietz}
\begin{document}

\renewcommand{\meinsubtext}{quasi-variational inclusions with bifunctions}

\section*{An Order-Theoretical Multi-Valued Fixed Point Approach to Quasi-Variational Inclusions with Bifunctions}

Christoph Tietz, Technische Universität Dresden

\begin{abstract}
\noindent \textbf{Abstract.} We present an order-theoretical fixed point theorem for increasing multivalued operators suitable for the method of sub-supersolutions and its application to the following multivalued quasi-variational inclusion: Let $\Omega \subset \mathbb R^N$ be a bounded Lipschitz domain and $W = W_0^{1,p}(\Omega)$. Find $u\in W$ such that for some measurable selection $\eta$ of $f(\cdot,u,u)$ it holds
\begin{equation*}
\langle Eu,w-u\rangle + \int_\Omega \eta(w-u) + K(w,u) - K(u,u) \geq 0\quad\text{for all }w\in W,
\end{equation*}
where $E\colon W \to W^\ast$ is an elliptic Leray-Lions operator of divergence form, $f\colon \Omega \times \mathbb R\times \mathbb R \to \mathcal P(\mathbb R)$ is a multivalued bifunction being upper semicontinuous in the second and decreasing in the third argument, and $K(\cdot,u)$ is a convex functional for each $u\in W$. Under weak assumptions on the data we will prove that there are smallest and greatest solutions between each pair of appropriately defined sub-supersolutions.
\end{abstract}

\section{Introduction}

In order to solve variational inequalities and inclusions in various analytical settings lacking coercivity, the method of sub-supersolutions has been applied successfully, see, e.g., \cite{CaHe11,CaLe15,CaTi18,Le15,Le14} and the recent monograph \cite{CaLe21}. 
This method relies on a combination of sophisticated analytical existence results and truncation-techniques with respect to appropriately defined sub-supersolutions. 
In \cite{Ti19, Ti19b} we proposed a framework for this method in order to make the interplay of analysis and order-theory as lucid as possible. This opens the possibility to replace functionals appearing in variational inequalities and inclusions by bifunctionals, which considerably extends the class of solvable problems while maintaining qualitative properties of the solution set. It is the aim of this paper to refurnish this framework, give an easier proof, and present a sophisticated application. 

To be a little bit more concrete, let $W$ be a partially ordered space and, for each $u\in W$, $A_u\colon W \to \mathbb R\cup \{\infty\}$ a functional. 
Then the variational problem is to find $u\in W$ such that
\begin{equation} 
    A_u(w)-A_u(u) \geq 0 \quad\text{for all }w \in W. \label{eq.vi}
\end{equation}
In order to ensure the existence of a solution $u\in W$ of \eqref{eq.vi}, one clearly needs more information. In particular, in order to apply analytical methods, one presumes that the mapping $u\mapsto A_u$ is continuous in some way. Then, one incorporates a subsolution $\ul u \in W$, which is a solution of the appropriately defined variational inequality
\begin{equation*} 
    \ul A_{\ul u}(w) - \ul A_{\ul u}(\ul u) \geq 0 \quad\text{for all }w \in \ul T_{\ul u} \subset W, \label{eq.vi.sub}
\end{equation*}
and a supersolution $\ol u$, which is similarly defined. At this point, order-theory comes into play. For instance, it is preferable that the supremum of increasing sequences of subsolutions is again a subsolution, and that there is a solution above each subsolution. To obtain those results, sophisticated constructions and proofs are needed. However, in doing so, one realizes that it is often possible to extend the scope of the developed theory by allowing that the functional $A_u$ depends on $u$ not only continuously, but order-continuously. This leads to bifunctionals $A_{u,v} $, where $u \mapsto A_{u,v}$ preserves convergence in some sense, and $v\mapsto A_{u,v}$ preserves order in some sense. Thus, if one can solve variational inequalities of the form \eqref{eq.vi} by means of sub-supersolutions, there is good hope that one can also solve the variational inequality
\[A_{u,u}(w) - A_{u,u}(u) \geq 0\quad\text{for all } w \in W.\]
This procedure extends in a natural way to multivalued quasi-variational inclusions as we will see in this paper. In the second section, we will present the related fixed point theory, where we incorporate ideas from \cite{Li2} to obtain a more general form of the framework presented in \cite{Ti19} and a simpler proof that relies on a multivalued Tarski fixed point theorem and not anymore on an abstract chain generating principle. The third section contains some abstract order-theoretical results about quasi-variational inclusions with bifunctions of the general form
\begin{equation*} 
\text{find }u\in K(u,u)\quad\text{s.t.}\quad a(w)-a(u) \geq 0 \quad\text{for some $a\in A(u,u)$ and all } w \in T(u,u),
\end{equation*}
where the elements of $A(u,u)$ are functionals $a\colon W \to \mathbb R\cup\{\infty\}$ and $K(u,u), T(u,u) \subset W$.
To conclude the paper, we will apply the developed theory to give a concise proof without the need for technical approximation considerations for the existence of solutions of the following multivalued quasi-variational inclusion problem considered in \cite{Le15}: Find $u\in W_0^{1,p}(\Omega)$ such that there is a measurable selection $\eta$ of $f(\cdot,u,u)$  such that
\begin{equation*}
\langle Eu,w -u \rangle + \int_\Omega \eta(w-u) + K(w,u) - K(u,u) \geq 0\quad \text{for all }w\in W^{1,p}(\Omega).
\end{equation*}
There, $W = W_0^{1,p}(\Omega)$ is a Sobolev space over a bounded Lipschitz domain $\Omega\subset \mathbb R^N$,
$E\colon W \to W^\ast$ is an elliptic differential operator of Leray-Lions type, $f\colon \Omega \times \mathbb R\times \mathbb R \to \mathcal P(\mathbb R)$ is a multivalued bifunction,
and $K(\cdot,u)\colon W \to \mathbb R\cup\{\infty\}$ is a convex functional for each $u\in W$.
The key features in this problem are the weak assumptions on $K$ and $f$ such that a wide range of quasi-variational inclusions is covered. For example, $K(\cdot,u)$ allows for the characteristic functional $I_{K(u)}\colon W \to \mathbb R \cup\{\infty\}$ of a convex set $K(u) \subset W$, and $f$ is assumed to be upper semicontinuous in the second and increasing in the third argument, so that the mapping $s\mapsto f(x,s,s)$ is in general neither upper semicontinuous nor lower semicontinuous nor monotone. The lack of continuity allows the solution set to be not compact, but still one can show that there are extremal (i.e. smallest and greatest) solutions between every ordered pair of sub-supersolutions. 

We would like to mention that the method developed here can be used to treat the following even more general problem (cf. \cite{CaLe15}): Find $u\in W^{1,p}(\Omega)$ such that there are measurable selections $\eta$ of $f(\cdot,u,u)$ and $\xi$ of $\hat f(\cdot, \gamma u,\gamma u)$ such that
\begin{equation*}
    \begin{split}
        \langle Eu,w-u \rangle &+ \int_\Omega \eta (w-u) dx + \int_{\partial \Omega} \xi (\gamma w - \gamma u)d\sigma \\
        &+ K(w,u) - K(u,u) + \hat K(\gamma w,\gamma u) - \hat K(\gamma u,\gamma u) \geq 0\quad\text{for all }w\in W^{1,p}(\Omega),
    \end{split}
\end{equation*}
where $\gamma u$ denotes the trace of $u$, $\sigma$ the boundary measure on the boundary $\partial \Omega$ of $\Omega$, and where $\hat f\colon \partial \Omega\times \mathbb R\times \mathbb R \to \mathcal P(\mathbb R)$ and $\hat K\colon L^p(\partial\Omega) \times L^p(\partial\Omega) \to \mathbb R\cup\{\infty\}$ have similar properties as $f$ and $K$.

\section{Abstract Subpoint Method}

Let $V = (V,\leq)$ be a \textbf{poset}, i.e. a set $V$ equipped with a partial order $\leq$. We will equip each subset $W \subset V$ with the induced partial order. Further, for $A,B \in \mathcal P(V)$, the power set of $V$, we write
\[A \leq^\ast B \quad :\Longleftrightarrow\quad \text{for all }a \in A \text{ there is } b\in B \text{ such that } a \leq b.\]
Obviously, $\leq^\ast$ is a preorder on $\mathcal P(V)$ which extends $\leq$ in the sense that $a\leq b$ if and only if $\{a\}\leq^\ast \{b\}$. We use $\leq^\ast$ to extend the notion of increasing functions to the case of multifunctions:

\begin{definition}
Let $V$ be a poset and $S\colon V\to \mathcal P(V)$ a multifunction. Then $S$ is called \textbf{increasing upward} if $v\leq w$ in $V$ implies $S(v) \leq^\ast S(w)$.
\end{definition}

Obviously, a single-valued function $s\colon V \to V$ is increasing if and only if the corresponding multifunction $S\colon V \to \mathcal P(V)$ with $S(v) = \{s(v)\}$ is increasing upward. In the following, we will identify $s$ with $S$ and each $v\in V$ with the singleton $\{v\}$. 

Furthermore, we extend the notion of fixed points to multifunctions as follows:

\begin{definition}
Let $V$ be a poset, $W \subset V$, and $S \colon V \to \mathcal P(W)$ a multifunction. Then the set $\sub S$ of \textbf{subpoints} of $S$ and the set $\fix S$ of \textbf{fixed points} of $S$ are given by
    \[\sub S := \{v \in W\mip v \leq^\ast S(v)\},\qquad \fix S := \{v \in W\mip v \in S(v)\}.\]
\end{definition}

Note that subpoints of $S$ belong by definition to the codomain $W$. This will be important when assigning different properties to the domain $V$ and its subset $W$. In the following, we will first provide results concerning maximal subpoints and fixed points and then concerning greatest ones. Recall that $v^\ast$ is \textbf{maximal} in some poset $V$ if for each $w\in V$ with $v^\ast \leq w$ it follows $v^\ast = w$, and that $v^\ast$ is the \textbf{greatest} element of $V$ if $w\leq v^\ast$ for all $w\in V$. \textbf{Minimal} and \textbf{smallest} elements are defined by duality.

The interplay of increasing upward multifunctions and subpoints as well as fixed points is illustrated in the following proposition:

\begin{proposition}
\label{prop.meta}
Let $V$ be a poset, $W \subset V$, and let $S \colon V \to \mathcal P(W)$ be increasing upward. Then every maximal subpoint of $S$ is a maximal fixed point of $S$. \end{proposition}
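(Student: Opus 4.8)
The plan is to establish two facts: first, that every maximal subpoint is in fact a fixed point, and second, that once this is known, maximality as a subpoint transfers automatically to maximality as a fixed point. I would begin by recording the elementary inclusion $\fix S \subseteq \sub S$: if $v \in S(v)$, then choosing $b = v$ witnesses $\{v\} \leq^\ast S(v)$, so $v$ is a subpoint. This inclusion makes the second fact nearly immediate, so the substantial content lies in the first.

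For the heart of the argument, let $v^\ast$ be a maximal subpoint. By definition $v^\ast \leq^\ast S(v^\ast)$, so there is some $b \in S(v^\ast)$ with $v^\ast \leq b$; note that $b \in W$ since $S$ takes values in $\mathcal P(W)$. The key step — which I expect to be the only nonroutine part — is to observe that $b$ is itself a subpoint. Indeed, because $v^\ast \leq b$ and $S$ is increasing upward, we have $S(v^\ast) \leq^\ast S(b)$, and applying this relation to the particular element $b \in S(v^\ast)$ produces some $c \in S(b)$ with $b \leq c$. Hence $b \leq^\ast S(b)$, i.e. $b \in \sub S$.

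Now I would invoke maximality: since $v^\ast \leq b$ with $b \in \sub S$ and $v^\ast$ is maximal in $\sub S$, it follows that $v^\ast = b$. But $b \in S(v^\ast)$, so $v^\ast \in S(v^\ast)$, proving $v^\ast \in \fix S$. Finally, to see that $v^\ast$ is a maximal fixed point, suppose $w \in \fix S$ with $v^\ast \leq w$. By the inclusion $\fix S \subseteq \sub S$ we then have $w \in \sub S$, and the maximality of $v^\ast$ as a subpoint forces $v^\ast = w$. Thus $v^\ast$ is maximal in $\fix S$, which completes the argument. The whole proof is a short piece of definition-chasing; the one idea that does the work is promoting the witness $b$ of the subpoint inequality to a subpoint in its own right via the increasing-upward hypothesis.
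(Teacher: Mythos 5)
Your proposal is correct and follows essentially the same route as the paper's own proof: you promote the witness $b \in S(v^\ast)$ to a subpoint via the increasing-upward hypothesis, invoke maximality to get $v^\ast = b \in S(v^\ast)$, and then use the inclusion $\fix S \subseteq \sub S$ to transfer maximality. The only difference is cosmetic — you spell out the intermediate element $c \in S(b)$ and the inclusion $\fix S \subseteq \sub S$ explicitly, which the paper leaves implicit.
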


\begin{proof}
Let $v^\ast$ be a maximal element of $\sub S$. Then there is $w\in S(v^\ast)$ such that $v^\ast \leq w$. Since $S$ is increasing upward, it follows $S(v^\ast) \leq^\ast S(w)$ and especially $w \leq^\ast S(w)$, i.e. $w\in \sub S$. By maximality of $v^\ast$, we obtain $v^\ast = w$ and thus $v^\ast \in S(v^\ast)$, i.e. $v^\ast \in \fix S$.

Now, let $w\in \fix S$ with $v^\ast \leq w$. Then from $\fix S \subset \sub S$ and the maximality of $v^\ast$ we obtain $v^\ast = w$, i.e. $v^\ast$ is maximal in $\fix S$.
\end{proof}

This result is useful because order-theoretical existence results can be applied with more ease to $\sub S$ than to $\fix S$. In \cite{Ti19,Ti19b}, e.g., we have applied an abstract chain generating principle from \cite{HeLa94}. Here, let us present a simpler approach which is essentially given in \cite{Li2}, where the notion of universally inductive subsets (which we will simply call inductive in their supersets, see below) was introduced. The key idea is to extend the famous Fixed Point Theorem of Tarski (whose proof works via inspection of subpoints) to the multi-valued case.

In order to prove the existence of a maximal subpoint, we will use two notions concerning the existence of special upper bounds:

\begin{definition}
Let $V$ be a poset and $C,D \subset V$.
\begin{enumerate}
    \item[(i)] $d\in D$ is called an \textbf{upper bound} of $C$ if $C \leq^\ast d$.
    \item[(ii)] $C$ is called a \textbf{chain} if it is totally ordered. $D$ is called \textbf{inductive} in $V$, if for each chain $C \subset V$ from $C\leq^\ast D$ it follows $C \leq^\ast d$ for some $d\in D$.
    \item[(iii)] If $C$ has a smallest upper bound in $V$, this upper bound is called \textbf{supremum} of $C$ and denoted by $\sup C$. $V$ is called \textbf{chain-complete} if $\sup C$ exists in $V$ for each chain $C\subset V$.
\end{enumerate}

\end{definition}

As usual, a poset $V$ is called \textbf{inductive} if it is inductive in itself, i.e. if each chain $C\subset V$ is bounded by some $v\in V$. Then, the famous Lemma of Zorn states that each inductive poset has a maximal element $v^\ast$. To ensure the existence of a maximal subpoint it thus suffices to provide conditions under which $\sub S$ is inductive. The following purely order-theoretical result in combination with Proposition \ref{prop.meta} is a slight generalization of \cite[Theorem 3.4]{Li2}:

\begin{proposition} \label{prop.subS.inductive}
Let $V$ be a chain-complete poset, $W \subset V$, and let $S \colon V \to \mathcal P(W)$ be increasing upward and such that all its values $S(v)$ are inductive in $W$. Then $\sub S$ is inductive.
\end{proposition}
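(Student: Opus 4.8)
The plan is to verify the defining property of inductiveness directly: given an arbitrary chain $C \subset \sub S$, I would produce an upper bound of $C$ that again lies in $\sub S$. The element around which the whole argument is organised is the supremum $v^\ast \defeq \sup C$, which exists in $V$ because $V$ is chain-complete and $C$ is a chain in $V$; note that this also covers the empty chain, whose supremum is the bottom element of $V$. Throughout I would freely use that $C \subset \sub S \subset W$, so that $C$ is in particular a chain in $W$.

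The first step is to show that $v^\ast$ ``sees'' the whole chain through $S$, i.e. $C \leq^\ast S(v^\ast)$. For each $c \in C$ I would combine two facts: since $c \in \sub S$ we have $\{c\} \leq^\ast S(c)$, and since $c \leq v^\ast$ and $S$ is increasing upward we have $S(c) \leq^\ast S(v^\ast)$. As $\leq^\ast$ is a preorder and hence transitive, these yield $\{c\} \leq^\ast S(v^\ast)$, and collecting over all $c \in C$ gives $C \leq^\ast S(v^\ast)$.

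Now the hypothesis that $S(v^\ast)$ is inductive in $W$ comes into play. Since $C$ is a chain contained in $W$ with $C \leq^\ast S(v^\ast)$, inductivity yields a single element $d \in S(v^\ast)$ with $C \leq^\ast d$; thus $d \in W$ is an upper bound of $C$. In the edge case $C = \emptyset$ this step still delivers some $d \in S(v^\ast)$, so it simultaneously secures that $\sub S$ is nonempty.

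It remains to see that $d$ is itself a subpoint. Because $d$ is an upper bound of $C$ and $v^\ast$ is the \emph{least} upper bound, we have $v^\ast \leq d$. Applying that $S$ is increasing upward a second time gives $S(v^\ast) \leq^\ast S(d)$, and since $d \in S(v^\ast)$ there is $e \in S(d)$ with $d \leq e$, i.e. $\{d\} \leq^\ast S(d)$. Hence $d \in \sub S$, and together with $C \leq^\ast d$ this exhibits the required upper bound of $C$ inside $\sub S$; as $C$ was arbitrary, $\sub S$ is inductive. I expect the only delicate point to be keeping the two distinct uses of ``increasing upward'' straight --- once to push the chain up into $S(v^\ast)$, and once to promote the extracted bound $d$ back into $\sub S$ --- while the real crux is the double duty played by $d$, which serves at the same time as an upper bound of $C$ and as a subpoint.
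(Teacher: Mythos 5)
Your proof is correct and follows essentially the same route as the paper's: take $\sup C$, push the chain into $S(\sup C)$ via increasing upwardness and transitivity of $\leq^\ast$, extract a single upper bound $d \in S(\sup C)$ from inductivity in $W$, and use $\sup C \leq d$ together with a second application of increasing upwardness to show $d \in \sub S$. The extra care you take (noting $C \subset W$ and the empty-chain case) is harmless and only makes explicit what the paper leaves implicit.
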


\begin{proof}
Let $C \subset \sub S$ be a chain. Then $C \subset V$ and thus $c := \sup C$ exists in $V$. Now, for every $v \in C$ we have
$v \leq^\ast S(v) \leq^\ast S(c)$,
since $S$ is increasing upward. Since $\leq^\ast$ is transitive, we obtain $C \leq^\ast S(c)$, and since $S(c)$ is inductive in $W$, there is some $c^\ast \in S(c)$ such that $C \leq^\ast c^\ast$. By definition of $c$, we have $c \leq c^\ast$ and thus
$c \leq c^\ast \in S(c) \leq^\ast S(c^\ast)$,
which implies that $c^\ast \in \sub S$ is an upper bound of $C$. Consequently, $\sub S$ is inductive.
\end{proof}

\begin{remark}
Since $\emptyset$ is a chain, it follows that every inductive set is non-empty and that each chain-complete poset has a smallest element $\ul u$. So, let $V$ be a poset with smallest element $\ul u$, let $W\subset V$ and let $S\colon V \to \mathcal P(W)$ be increasing upward and such that all its values are non-empty. Take $u^\ast \in S(\ul u)$, then $\ul u \leq u^\ast$ and thus $u^\ast \in S(\ul u) \leq^\ast S(u^\ast)$, meaning that $u^\ast \in \sub S$, i.e. $\sub S \neq \emptyset$.

\end{remark}

There are simple criteria for sets to be chain-complete or inductive if one considers \textbf{ordered topological spaces}, i.e. posets $V$ equipped with a topology $\tau$ such that for all $a, b \in V$ the order intervals 
\begin{equation*}
a^\uparrow := \{c\in V: a\leq c\},\qquad b^\downarrow := \{c\in V: c\leq b\},\qquad [a,b] := a^\uparrow \cap b^\downarrow
\end{equation*} 
are closed. We recall them from \cite{Li2}, but give a much simpler proof for the second part, which is inspired by \cite[Proposition 1.1.4]{HeLa94}.

\begin{proposition}
Let $V$ be an ordered topological space and let $A\subset V$ be compact.
\begin{enumerate}
    \item[(i)] If $A$ is closed and non-empty, then $A$ is inductive in $V$.
    \item[(ii)] If $A$ has a smallest element, then $A$ is chain-complete.
\end{enumerate} 
\end{proposition}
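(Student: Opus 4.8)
The plan is to prove both parts with a single underlying tool, namely compactness of $A$, but deployed in two different guises: the finite intersection property for (i), and cluster points of nets for (ii).

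For (i), I would fix a chain $C \subset V$ with $C \leq^\ast A$ and search for a single upper bound of $C$ lying in $A$. The natural candidate sets are the upper sections $A \cap c^\uparrow$ for $c \in C$, since any point common to all of them is precisely an element of $A$ lying above every $c \in C$, i.e. the bound we want. As $A$ and each $c^\uparrow$ are closed, these sets are closed, and they all sit inside the compact set $A$; so by the finite intersection property it suffices to check that finitely many of them meet. Given $c_1,\dots,c_n \in C$, totality of the chain provides a largest element $c_{i_0}$, and $C\leq^\ast A$ furnishes some $a\in A$ with $c_{i_0}\leq a$; this $a$ then lies in every $A \cap c_j^\uparrow$. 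Compactness hence yields a point $a^\ast \in \bigcap_{c\in C} (A\cap c^\uparrow)$, which is the required bound. The empty chain is covered separately by non-emptiness of $A$.

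For (ii), given a chain $C \subset A$ I would view $C$ as a net indexed by itself (legitimate, since a non-empty chain is directed) and use compactness of $A$ to extract a cluster point $c^\ast \in A$. The crucial observation is that $c^\ast$ is not merely an upper bound but the least one: for each fixed $c_0\in C$ the net is eventually in the closed set $c_0^\uparrow$, which forces $c_0 \leq c^\ast$; while for any upper bound $b\in A$ of $C$ the net lies entirely in the closed set $b^\downarrow$, which forces $c^\ast \leq b$. Together these give $c^\ast = \sup C$ in $A$. The empty chain again appeals to the hypothesis directly, since $\sup\emptyset$ is the smallest element of $A$, which is exactly what is assumed to exist.

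The step I expect to be the main obstacle is isolating the correct bound in (ii). A finite-intersection argument as in (i) readily produces upper bounds of $C$ inside $A$, and even minimal ones via Zorn's lemma, but without a lattice structure a minimal upper bound need not be the least one, so that route stalls. The cluster-point argument circumvents this precisely because $C$ is totally ordered: the two closedness conditions, on $c_0^\uparrow$ and on $b^\downarrow$, act in tandem to pin $c^\ast$ both above every element of $C$ and below every competing upper bound, delivering the supremum in one stroke. I would therefore take particular care in phrasing the ``eventually in a closed set'' and ``always in a closed set'' steps, since that is exactly where the closedness of the order intervals does the real work.
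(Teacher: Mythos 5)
Your proposal is correct. Part (i) is exactly the paper's argument: the finite intersection property applied to the closed sets $c^\uparrow \cap A$, with the largest of finitely many chain elements witnessing the finite intersections, and the empty chain handled by non-emptiness of $A$.

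Part (ii) is also correct, but it takes a genuinely different route from the paper. The paper stays entirely within the finite-intersection framework of part (i): it applies the FIP not to $\{c^\uparrow \cap A\}$ (which, as you rightly observe, would only produce some upper bound in $A$, not a least one) but to the family $\{c^\uparrow \cap \overline C : c \in C\}$, where $\overline C$ is the closure of the chain itself. The resulting point $a$ is simultaneously an upper bound of $C$ and an element of $\overline C$; since any upper bound $b$ satisfies $C \subset b^\downarrow$ and hence $\overline C \subset b^\downarrow$, this forces $a \leq b$, so $a = \sup C$. Your cluster-point argument achieves the same two-sided pinning: viewing $C$ as a net indexed by itself, compactness yields a cluster point $c^\ast \in A$, and the two closedness facts (the net is eventually in each $c_0^\uparrow$, and entirely in $b^\downarrow$ for any upper bound $b$) force $c_0 \leq c^\ast \leq b$. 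The mathematical content is essentially dual: a cluster point of the identity net is precisely a point of $\bigcap_{c \in C} \overline{\{c' \in C : c' \geq c\}}$, so both proofs exploit the same insight that the candidate supremum must cling to $C$ itself rather than merely to $A$. What the paper's version buys is uniformity and economy: both parts run on one tool (FIP) with no appeal to net-convergence theory, which is why the paper advertises it as a ``much simpler proof.'' What your version buys is transparency in the crucial step: the roles of ``above every element of $C$'' and ``below every upper bound'' are cleanly separated into ``eventually in a closed set'' versus ``always in a closed set,'' and your diagnosis of why a naive minimal-upper-bound argument stalls (minimal need not be least without lattice structure) is exactly the pitfall the paper's $\overline C$-trick is designed to avoid.
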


\begin{proof}
Ad (i), let $C\subset V$ be a chain such that $C\leq^\ast A$. If $C=\emptyset$, then any $a\in A$ satisfies $C\leq^\ast a$. If $C\neq \emptyset$, then the family $\mathcal C := \{c^\uparrow\cap A\,:\, c\in C\}$ consists of closed subsets of the compact set $A$ and has the finite intersection property. Thus there is some $a \in \bigcap \mathcal C$, such that $C\leq^\ast a$ and $a\in A$. Consequently, $A$ is inductive in $V$.

Ad (ii), let $C \subset A$ be a chain. If $C = \emptyset$, then $\sup C$ is the smallest element of $A$. If $C \neq \emptyset$, then the closure $\overline C$ of $C$ is non-empty. Then the family $\mathcal C := \{c^\uparrow \cap \overline C\,:\,c\in C\}$ consists of closed subsets of the compact set $A$ and has the finite intersection property. Thus there is some $a \in \bigcap \mathcal C$, such that $C \leq^\ast a$ and $a \in \overline C$. For any upper bound $b$ of $C$ we have $\overline C \subset b^\downarrow$, and so it follows $a \leq b$ and $\sup C = a \in A$. Consequently, $A$ is chain-complete.
\end{proof}

It is straightforward to combine the results so far to obtain the following fixed point theorem, from which \cite[Theorem 2.11]{Ti19} readily follows:

\begin{theorem}\label{thm.abstract.1}
Let $V$ be a poset, $W\subset V$, and let $V$ and $W$ be equipped with topologies such that they become ordered topological spaces. Further, let $D \subset V$ be a compact set with smallest element $\ul u$, and let $S\colon D \to \mathcal P (D \cap W)$ be an increasing upward multifunction whose values are non-empty and compact and closed in $W$. Then $S$ has a maximal fixed point.
\end{theorem}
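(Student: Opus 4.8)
The plan is to produce a maximal \emph{subpoint} of $S$ and then convert it into a maximal fixed point by Proposition~\ref{prop.meta}. Thus the entire argument runs through the chain ``Proposition~\ref{prop.subS.inductive} then Zorn's Lemma then Proposition~\ref{prop.meta}'', and the genuine work is only to verify the hypotheses of Proposition~\ref{prop.subS.inductive} under the correct identification of the abstract data. Concretely, I would let $D$ play the role of the chain-complete poset of that proposition and let $D\cap W$ play the role of its codomain-restricting subset, so that $S$ is viewed as an increasing upward map $D\to\mathcal P(D\cap W)$.

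First I would establish chain-completeness of $D$. Since $D$ is compact and has the smallest element $\ul u$, part~(ii) of the preceding proposition on ordered topological spaces applies in the ambient space $V$ and yields that $D$ is chain-complete; this is precisely the order-theoretic property that guarantees $\sup C \in D$ for every chain $C\subset D$. Next I would check that every value $S(v)$ is inductive in $D\cap W$. By hypothesis $S(v)$ is non-empty, compact, and closed in the ordered topological space $W$, so part~(i) of the same proposition, applied this time in the ambient space $W$, gives that $S(v)$ is inductive in $W$. Because inductiveness in $W$ is a condition quantified over \emph{all} chains of $W$, and every chain of the subset $D\cap W$ is in particular a chain of $W$, it follows immediately that $S(v)$ is inductive in $D\cap W$ as well.

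With $D$ chain-complete, $S\colon D\to\mathcal P(D\cap W)$ increasing upward, and all values inductive in $D\cap W$, Proposition~\ref{prop.subS.inductive} then shows that $\sub S$ is inductive, hence in particular non-empty (as $\emptyset$ is a chain). Applying Zorn's Lemma to the inductive poset $\sub S$ produces a maximal element $v^\ast$ of $\sub S$, that is, a maximal subpoint of $S$. Finally, since $S$ is increasing upward, Proposition~\ref{prop.meta} turns this maximal subpoint into a maximal fixed point of $S$, which is the assertion.

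The step I expect to require the most care is the bookkeeping of the two \emph{a priori} unrelated topologies: $D$ is compact in the topology of $V$, and its chain-completeness must be harvested there, whereas the inductiveness of each $S(v)$ must be read off from the \emph{separate} topology of $W$. What makes this unproblematic is that Propositions~\ref{prop.meta} and~\ref{prop.subS.inductive} are purely order-theoretic: once the two topological facts ($D$ chain-complete in $V$, and $S(v)$ inductive in $W$) have been extracted from their respective spaces, the remaining combination is completely insensitive to how the topologies interact. The only delicate passage is then ``inductive in $W$ implies inductive in $D\cap W$,'' and this is harmless precisely because the defining $\leq^\ast$-domination condition for chains lying in $D\cap W$ is merely a sub-family of the corresponding condition for chains lying in $W$.
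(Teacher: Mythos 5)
Your proof is correct and takes essentially the approach the paper intends: the paper offers no written-out proof of Theorem~\ref{thm.abstract.1} (it only says it is ``straightforward to combine the results so far''), and your chain --- compactness plus the smallest element $\ul u$ gives chain-completeness of $D$ via part~(ii); non-empty, compact, closed values give inductiveness in $W$ and hence in $D\cap W$ via part~(i); then Proposition~\ref{prop.subS.inductive}, Zorn's Lemma, and Proposition~\ref{prop.meta} --- is exactly that combination. Your care with the two a priori unrelated topologies is also well placed, matching the paper's own remark that $W$ need not carry the topology induced from $V$.
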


\begin{remark}
If $V$ is an ordered topological space then $W$ becomes an ordered topological space if equipped with the induced topology as a subset of $V$ (and, as always, with the induced partial order). However, there is no need to use the induced topology on $W$.
\end{remark}

In order to obtain even greatest fixed points, we use the following elementary results from \cite[Section 2.3.2]{Ti19}:

\begin{proposition}\label{prop.greatest}
Let $V$ be an \textbf{upper semilattice} (i.e. a poset in which $\sup\{a,b\}$ exists for all $a,b \in V$) and let $\ul S, S\colon V \to \mathcal P(V)$ be multifunctions.
\begin{enumerate}
    \item[(i)] If $S$ has a maximal fixed point, then $S$ has a greatest fixed point if and only if $\fix S$ is \textbf{directed upward} (i.e. for all $a,b \in \fix S$ there is $c\in \fix S$ such that $\{a,b\} \leq^\ast c$).
    \item[(ii)] If all values of $S$ are directed upward and if $S$ is \textbf{permanent upward} (i.e. from $a \leq b$ it follows $S(a) \subset S(b)$), then $\fix S$ is directed upward.
    \item[(iii)] If $\ul S$ and $S$ are equivalent in the sense that $S(v)\leq^\ast \ul S(v) \leq^\ast S(v)$ for all $v\in V$, then $S$ is increasing upward if and only if $\ul S$ is increasing upward.
    \item[(iv)] If $\ul S$ and $S$ are equivalent in the sense that $\fix S \leq^\ast \fix \ul S \leq^\ast \fix S$, then $S$ and $\ul S$ have the same maximal and greatest fixed points.
\end{enumerate}
\end{proposition}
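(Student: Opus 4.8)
The plan is to treat the four parts separately, using throughout the transitivity of $\leq^\ast$ (it is a preorder) and the upper-semilattice structure of $V$. For (i) I would first dispatch the easy implication: if $S$ has a greatest fixed point $v^\ast$, then every pair $a,b \in \fix S$ satisfies $\{a,b\}\leq^\ast v^\ast$ with $v^\ast \in \fix S$, so $\fix S$ is directed upward. For the converse, assume $\fix S$ is directed upward and let $v^\ast$ be a maximal fixed point. Given any $w \in \fix S$, directedness yields $c \in \fix S$ with $v^\ast \leq c$ and $w \leq c$; maximality of $v^\ast$ forces $v^\ast = c$, whence $w \leq v^\ast$, so $v^\ast$ is greatest.

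For (ii), given $a,b \in \fix S$ I would set $e := \sup\{a,b\}$, which exists since $V$ is an upper semilattice. Permanence upward turns $a \leq e$ and $b \leq e$ into $S(a)\subset S(e)$ and $S(b)\subset S(e)$, so both $a$ and $b$ lie in $S(e)$. Since $S(e)$ is directed upward, there is $d \in S(e)$ with $\{a,b\}\leq^\ast d$; as $d$ is then an upper bound of both $a$ and $b$, we get $e = \sup\{a,b\} \leq d$. The key observation, which I expect to be the crux of the whole proposition, is that a \emph{second} application of permanence to $e \leq d$ gives $S(e)\subset S(d)$, so from $d \in S(e)$ we conclude $d \in S(d)$, i.e. $d \in \fix S$. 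This $d$ is the desired fixed-point upper bound of $a$ and $b$, establishing that $\fix S$ is directed upward.

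For (iii) I would exploit that the hypothesis $S(v)\leq^\ast \ul S(v)\leq^\ast S(v)$ is symmetric in $S$ and $\ul S$, so it suffices to prove that $S$ increasing upward implies $\ul S$ increasing upward. For $v \leq w$ this is immediate from the chain $\ul S(v)\leq^\ast S(v)\leq^\ast S(w)\leq^\ast \ul S(w)$ and transitivity of $\leq^\ast$. Likewise in (iv) the hypothesis $\fix S \leq^\ast \fix \ul S \leq^\ast \fix S$ is symmetric, so I would only show that maximal (resp. greatest) elements of $\fix S$ are maximal (resp. greatest) in $\fix \ul S$. For a maximal $v^\ast \in \fix S$, sandwiching $v^\ast \leq u \leq w$ with $u \in \fix \ul S$ and $w \in \fix S$ and invoking maximality collapses this to $v^\ast = u = w$, so in particular $v^\ast \in \fix \ul S$; a second such sandwiching shows no element of $\fix \ul S$ lies strictly above $v^\ast$. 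If moreover $v^\ast$ is greatest in $\fix S$, then for any $u \in \fix \ul S$ one picks $w \in \fix S$ with $u \leq w \leq v^\ast$, giving $u \leq v^\ast$, so $v^\ast$ is greatest in $\fix \ul S$ as well.

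I do not foresee a serious obstacle beyond part (ii): once one notices that the directed-upward witness $d \in S(e)$ is automatically a fixed point, the statement follows, and the remaining parts are formal manipulations of $\leq^\ast$ resting on its transitivity and on the symmetry of the two equivalence hypotheses.
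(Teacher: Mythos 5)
Your proposal is correct in all four parts. Note, however, that the paper itself gives no proof of this proposition: it is quoted as an ``elementary result'' from \cite[Section 2.3.2]{Ti19}, so there is no in-paper argument to compare against; your write-up is a self-contained substitute for the external reference. Checking it on its own merits: (i) is exactly the standard two-line argument (greatest $\Rightarrow$ directed is trivial; directed plus maximality of $v^\ast$ collapses any common upper bound $c \in \fix S$ of $v^\ast$ and $w$ to $v^\ast$ itself). In (ii) you correctly identified the one genuinely non-formal step: after producing $d \in S(e)$ with $\{a,b\} \leq^\ast d$, the inequality $e = \sup\{a,b\} \leq d$ together with a \emph{second} application of permanence upward gives $S(e) \subset S(d)$, hence $d \in S(e) \subset S(d)$, so the witness $d$ is automatically a fixed point; this is precisely where the upper-semilattice hypothesis and permanence interact, and without it the statement would fail. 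Parts (iii) and (iv) are, as you say, formal consequences of the transitivity of $\leq^\ast$ and the symmetry of the two equivalence hypotheses; your sandwich argument in (iv) correctly uses antisymmetry of $\leq$ on $V$ to extract $v^\ast = u \in \fix \ul S$ from $v^\ast \leq u \leq w = v^\ast$, which is the only place where one must be slightly careful. I see no gaps.
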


Now, we have all the ingredients to provide the general framework for the abstract subpoint method. To this end, the so called suboperator $\ul S$ has to satisfy various order-theoretical assumptions, while the main operator $S$ is asked to have good topological properties.

\begin{theorem}\label{thm.greatest.fixedpoint}
Let $V$ be a poset, $W\subset V$, and let $V$ and $W$ be equipped with topologies such that they become ordered topological spaces. Further, let $D \subset V$ and let $\ul S\colon D \to \mathcal P(D)$ and $S\colon D \to \mathcal P(D \cap W)$ be multifunctions such that the following conditions are satisfied:
\begin{enumerate}
    \item[(i)] $D$ is an upper semilattice and compact in $V$,
    \item[(ii)] all values of $S$ are compact and closed in $W$,
    \item[(iii)] $\ul S$ is permanent upward, all its values are directed upward, and for all $v \in D$ it holds $S(v) \subset \ul S(v) \leq^\ast S(v)$,
    \item[(iv)] there is $\ul u \in D$ such that $\ul u \leq^\ast \ul S(\ul u)$.
\end{enumerate}
Then $S$ has a greatest fixed point $u^\ast$ and it holds $\ul u \leq u^\ast$.
\end{theorem}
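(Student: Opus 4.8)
The plan is to play the two operators off against each other: $\ul S$ supplies the order structure (it is permanent upward with directed values) and $S$ the topological structure (compact, closed values), and the whole argument consists in transferring information through the sandwich $S(v) \subset \ul S(v) \leq^\ast S(v)$ from (iii). First I would record two facts. Because $\ul S$ is permanent upward, $a \leq b$ gives $\ul S(a) \subset \ul S(b)$, so every element of $\ul S(a)$ lies in $\ul S(b)$ and dominates itself; hence $\ul S(a) \leq^\ast \ul S(b)$ and $\ul S$ is increasing upward. The inclusion $S(v)\subset \ul S(v)$ gives $S(v) \leq^\ast \ul S(v)$, which together with $\ul S(v)\leq^\ast S(v)$ makes $\ul S$ and $S$ equivalent in the sense of Proposition \ref{prop.greatest}(iii), so $S$ is increasing upward too; it also gives $\fix S \subset \fix \ul S$, since $v \in S(v)$ implies $v \in \ul S(v)$.

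The core of the proof is a single claim used twice: for every $w \in \sub \ul S$ there is a maximal fixed point $v^\ast$ of $S$ with $w \leq v^\ast$. To prove it I would set $D_w := w^\uparrow \cap D$; since $w^\uparrow$ is closed and $D$ is compact, $D_w$ is compact, and as it has smallest element $w$ it is chain-complete (a compact set with a smallest element is chain-complete). I then work inside $\Sigma := \sub S \cap w^\uparrow$ and adapt the proof of Proposition \ref{prop.subS.inductive}. The set $\Sigma$ is non-empty: from $w \leq^\ast \ul S(w) \leq^\ast S(w)$ one gets $s \in S(w) \subset W$ with $w \leq s$, and since $s \in S(w) \subset \ul S(w)$ and $\ul S$ is increasing upward, $s \leq^\ast \ul S(s) \leq^\ast S(s)$, so $s \in \Sigma$. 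And $\Sigma$ is inductive: for a non-empty chain $C \subset \Sigma$ put $c := \sup C \in D_w$; monotonicity gives $C \leq^\ast S(c)$, while $S(c)$ is non-empty (by permanence $\ul S(w) \subset \ul S(c)$ is non-empty and $\ul S(c) \leq^\ast S(c)$), compact and closed in $W$, hence inductive in $W$, so some $c^\ast \in S(c)$ dominates $C$; then $c \leq c^\ast$, and from $c^\ast \in S(c) \subset \ul S(c)$ with $\ul S$ increasing upward we get $c^\ast \leq^\ast \ul S(c^\ast) \leq^\ast S(c^\ast)$, so $c^\ast \in \Sigma$ bounds $C$. By Zorn's Lemma $\Sigma$ has a maximal element $v^\ast$, which is maximal in all of $\sub S$ (anything above it dominates $w$, so lies in $\Sigma$), whence $v^\ast \in \fix S$ and is maximal by Proposition \ref{prop.meta}.

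With the claim established I would conclude as follows. Condition (iv) says $\ul u \in \sub \ul S$, so the claim yields a maximal fixed point $v_0$ of $S$ with $\ul u \leq v_0$; in particular $S$ has a maximal fixed point. Next, $\fix S$ is directed upward: Proposition \ref{prop.greatest}(ii), applied to $\ul S$ over the upper semilattice $D$, shows $\fix \ul S$ is directed upward, so given $a,b \in \fix S \subset \fix \ul S$ there is $d \in \fix \ul S \subset \sub \ul S$ with $\{a,b\} \leq^\ast d$, and the claim furnishes $c \in \fix S$ with $d \leq c$, whence $\{a,b\} \leq^\ast c$. Since $D$ is an upper semilattice, $S$ has a maximal fixed point, and $\fix S$ is directed upward, Proposition \ref{prop.greatest}(i) gives a greatest fixed point $u^\ast$ of $S$, and finally $\ul u \leq v_0 \leq u^\ast$.

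The step I expect to be the main obstacle is the claim, and within it the passage showing that the maximal element of $\Sigma$ is not merely a maximal subpoint but a fixed point maximal in $\sub S$: neither operator alone does the job, so one must move back and forth across $S(v) \subset \ul S(v) \leq^\ast S(v)$ at every stage, using the compactness of the values of $S$ to bound chains (via inductiveness in $W$) and the monotonicity of $\ul S$ to re-certify each bound as a subpoint. A secondary point requiring care is confirming $S(c) \neq \emptyset$ at the supremum $c$, which I would extract from the permanence of $\ul S$ together with $\ul S(c) \leq^\ast S(c)$.
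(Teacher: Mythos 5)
Your proof is correct and follows essentially the same route as the paper: both arguments produce, above every subpoint of $\ul S$, a maximal fixed point of $S$ (via chain-completeness of a compact truncated domain, inductivity in $W$ of the non-empty compact closed values of $S$, Zorn's Lemma and Proposition \ref{prop.meta}), and then extract the greatest fixed point from the directedness machinery of Proposition \ref{prop.greatest} using $\fix S \subset \fix \ul S \subset \sub \ul S \leq^\ast \fix S$. The only real difference is one of implementation: where the paper restricts $D$, $\ul S$ and $S$ to $\ul u^\uparrow$ and invokes Theorem \ref{thm.abstract.1} (which is what forces its preliminary ``replace $\ul u$ by $\ul w \in S(\ul u)$'' step, needed so that $\ul u^\uparrow \cap W$ is closed in $W$), you run the inductivity argument directly on $\sub S \cap w^\uparrow$ without restricting the operators, which neatly sidesteps that reduction.
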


\begin{proof}
Without loss of generality, we can assume $\ul u \in \sub \ul S$, for otherwise from $\ul u \leq^\ast \ul S(\ul u)\leq^\ast S(\ul u)$ there is some $\ul w$ such that $\ul u \leq \ul w \in S(\ul u) \subset \ul S(\ul u)\leq^\ast \ul S(\ul w)$, implying $\ul w \in \sub \ul S$, so that we can replace $\ul u$ by $\ul w$.

Further, we can assume that $\ul u$ is the smallest element of $D$, for otherwise we restrict our considerations to $\ul u^\uparrow$ and check that the restrictions of $D$, $\ul S$ and $S$ to $\ul u^\uparrow$ satisfy the same properties as $D$, $\ul S$ and $S$ (for this we need $\ul u \in W$ such that $\ul u^\uparrow\cap W$ is closed in $W$). We conclude that the values of $S$ are non-empty, since for all $v\in D$ we have $\ul u \leq v$ and thus $\emptyset \neq \ul S(\ul u) \leq^\ast \ul S(v) \leq^\ast S(v)$.

By Proposition \ref{prop.greatest}(iii) we have that $S$ is increasing upward, and thus Theorem \ref{thm.abstract.1} ensures that $S$ has a maximal fixed point. Especially, by replacing $\ul u$ with any $\ul w \in \sub \ul S$, the arguments so far imply $\fix \ul S \subset \sub \ul S \leq^\ast \fix S$, whereas the inclusion $\fix S \subset \fix \ul S$ follows from $S(v) \subset \ul S(v)$. Thus, by repeated application of Proposition \ref{prop.greatest} we obtain that $S$ has a greatest fixed point. 
\end{proof}

\begin{remark} 
In applications, the fixed points of $S$ will be the solutions of a problem, while the fixed points or even the subpoints of $\ul S$ will be so called subsolutions. Since it is natural to define subsolutions in a way that a solution is always a subsolution, the seemingly strong condition $S(v) \subset \ul S(v)$ is natural. However, it is often non-trivial to provide the counterpart $\ul S(v) \leq^\ast S(v)$ and the directedness of $\ul S(v)$. In Section \ref{sec.application} we will present some analytic tools for this task which, however, depend on the existence of $\ul u \in \fix \ul S$.
\end{remark}

\begin{remark}
All results so far remain true if we replace the partial order $\leq$ in all statements by its dual order $\geq$. Thus, a multifunction $\overline S \colon D \to \mathcal P(D)$ and the existence of some $\overline u \in \fix \overline S$ such that $\underline u \leq \overline u$ can be used to find smallest fixed points of $S$ in $D = [\underline u,\overline u]$. 
\end{remark}

\section{Quasi-Variational Inclusions with Bifunctions}
\label{sec.variational}

Let us consider quasi-variational inclusions with bifunctions, i.e. variational inequalities with multi-valued functions in which the constraint set for the solution, the function to be minimized, as well as the set of test functions depend on the unknown solution itself, and this in a twofold way. To this end, let again $V$ be a poset and assume that $W \subset V$ is a \textbf{lattice} (i.e. for all $u,w\in W$ the supremum $u\vee w:= \sup\{u,v\}$ and the infimum $u\wedge w$ (the largest lower bound of $u$ and $w$) exist in $W$). Further, let $\Gamma(W)$ be the set of functions $a\colon W \to \mathbb R \cup \{\infty\}$ for which $a(u) \in \mathbb R$ for at least one $u\in W$, i.e. the \textbf{effective domain} $\mathcal D(a) := \{u\in W: a(u) \in \mathbb R\}$ is non-empty. Then we consider the following general problem:

\begin{definition}
Let $A\colon W\times V \to \mathcal P(\Gamma(W))$ and $K, T \colon W\times V \to \mathcal P(W)$ be multi-valued bifunctions. Then we say that $u\in W$ is a \textbf{solution} of the associated \textbf{quasi-variational inclusion problem} (QVIP) if and only if 
\begin{equation*} 
u\in K(u,u)\quad\text{and}\quad a(w)-a(u) \geq 0 \quad\text{for some $a\in A(u,u)$ and all } w \in T(u,u)
\end{equation*}
(which implicitly implies $u \in \mathcal D(a)$).
In this context, a function $u$ is admissible if and only if $u\in K(u,u)$, and the set $T(u,u)$ contains the corresponding test functions.
\end{definition}

Naturally, we can interpret (QVIP) as a fixed point problem by fixing one argument of the bifunctions. This leads to the following simpler problem:


\begin{definition}
\label{def.solution_operator}
Let $A\colon W\times V \to \mathcal P(\Gamma(W))$ and $K, T \colon W\times V \to \mathcal P(W)$ be multi-valued bifunctions. Then for each $v\in V$ we say that $u\in W$ is a \textbf{solution} of the associated \textbf{parameterized quasi-variational inclusion problem} (QVIP$_v$) if and only if 
\begin{equation} 
u\in K(u,v)\quad\text{and}\quad a(w)-a(u) \geq 0 \quad\text{for some $a\in A(u,v)$ and all } w \in T(u,v). \label{eq.vipv}
\end{equation}
The associated multi-valued \textbf{solution operator} is defined as
\[S\colon V \to \mathcal P(W),\quad S(v) := \{u \in W \,:\, u \text{ is a solution of (QVIP$_v$)}\}.\]
\end{definition}

Clearly, $u$ is a solution of (QVIP) if and only if $u$ is a solution of (QVIP$_u$), i.e. $u \in \operatorname{Fix}S$. Thus, by introducing subsolutions, we may be able to apply the abstract framework. It will come in useful to define subsolutions as solutions of certain appropriate defined quasi-variational inclusions of the form \eqref{eq.vipv} with slightly modified multifunctions $\underline A$, $\underline K$ and $\underline T$. Thus, we next provide some results about the monotone behavior of the solution operator $S$. 
To this end, we use the following relation for $a,b\in \Gamma(W)$, which goes well with variational problems on lattices:
\[a \preccurlyeq b \quad :\Longleftrightarrow\quad a(u\wedge w) + b(u\vee w) \leq a(u) + b(w)\quad\text{for all }u,w\in W.\]
This relation was introduced in \cite{Le14} as a natural extension of the so called strong set order on sets $A,B \in \mathcal P(W)$, i.e.
\[A \preccurlyeq B \quad :\Longleftrightarrow\quad u\wedge w \in A\quad\text{and}\quad u\vee w \in B\quad\text{for all }u\in A, w\in B\]
(we use the same relation symbol since $A\preccurlyeq B$ holds for non-empty $A,B \in \mathcal P(W)$ if and only if $I_A \preccurlyeq I_B$ holds for their characteristic functionals). Let us recall a few elementary properties: If $W$ contains two non-comparable elements 
then $\preccurlyeq$ is not reflexive. Thus, functions $a \in \Gamma(W)$ with $a \preccurlyeq a$ are meaningful; they are commonly called \textbf{submodular} (and a set $A\in\mathcal P(W)$ with $A\preccurlyeq A$ is called \textbf{sublattice} of $W$). Furthermore, $\preccurlyeq$ is in general neither symmetric nor anti-symmetric nor transitive. However, the following modified transitivity holds true:

\begin{proposition}\label{prop.transitive}
Let $a,b,c \in \Gamma(W)$ such that $a\preccurlyeq b$ and $b\preccurlyeq c$. 
\begin{enumerate}
    \item[(i)] Then $\mathcal D(a) \preccurlyeq \mathcal D(b) \preccurlyeq \mathcal D(c)$ and from this it follows $\mathcal D(a) \preccurlyeq \mathcal D(c)$.
    \item[(ii] If $b \preccurlyeq b$ and if $W$ is a \textbf{distributive} lattice (i.e. $\wedge$ and $\vee$ distribute over each other), then $a \preccurlyeq c$.
\end{enumerate}

\end{proposition}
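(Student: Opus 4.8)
The plan is to handle (i) by reading finiteness directly off the defining inequalities and then proving transitivity of the strong set order as a purely lattice-theoretic fact, and to handle (ii) by substituting three absorption-simplified arguments into the three hypotheses and cancelling the intermediate $b$-terms. For the first relation in (i) I would fix $u\in\mathcal D(a)$ and $w\in\mathcal D(b)$ and read $a(u\wedge w)+b(u\vee w)\le a(u)+b(w)$ off $a\preccurlyeq b$. The right-hand side is a real number, so both summands on the left, which a priori lie in $\mathbb R\cup\{\infty\}$, must be finite; hence $u\wedge w\in\mathcal D(a)$ and $u\vee w\in\mathcal D(b)$, i.e. $\mathcal D(a)\preccurlyeq\mathcal D(b)$, and $\mathcal D(b)\preccurlyeq\mathcal D(c)$ follows verbatim from $b\preccurlyeq c$.

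For the ``from this'' claim I would prove the general statement that $A\preccurlyeq B\preccurlyeq C$ forces $A\preccurlyeq C$ whenever $B\ne\emptyset$, which applies here since $\mathcal D(b)\ne\emptyset$ for $b\in\Gamma(W)$. Fixing $x\in A$, $z\in C$ and some $y\in B$, I first obtain $x\vee y\in B$ from $A\preccurlyeq B$ and $y\wedge z\in B$ from $B\preccurlyeq C$. Applying $B\preccurlyeq C$ to the pair $(x\vee y,z)$ gives $(x\vee y)\wedge z\in B$, and then $A\preccurlyeq B$ on the pair $(x,(x\vee y)\wedge z)$ gives, after the absorption simplification $x\wedge((x\vee y)\wedge z)=(x\wedge(x\vee y))\wedge z=x\wedge z$, that $x\wedge z\in A$. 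Dually, $A\preccurlyeq B$ on $(x,y\wedge z)$ yields $x\vee(y\wedge z)\in B$, and $B\preccurlyeq C$ on $(x\vee(y\wedge z),z)$ yields $x\vee z\in C$ via $(x\vee(y\wedge z))\vee z=x\vee z$. Only idempotency and absorption are used, so this part needs no distributivity.

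For (ii) I would discard the trivial case and assume $a(u)+c(w)<\infty$, so that $u\in\mathcal D(a)$ and $w\in\mathcal D(c)$; part (i) then makes $a(u\wedge w)$ and $c(u\vee w)$ finite as well. The engine of the proof is to specialise the three hypotheses by absorption: $a\preccurlyeq b$ at $(u,u\wedge w)$ gives $a(u\wedge w)+b(u)\le a(u)+b(u\wedge w)$; $b\preccurlyeq c$ at $(u\vee w,w)$ gives $b(w)+c(u\vee w)\le b(u\vee w)+c(w)$; and the submodularity $b\preccurlyeq b$ at $(u,w)$ gives $b(u\wedge w)+b(u\vee w)\le b(u)+b(w)$. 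Adding the three, the four quantities $b(u),b(w),b(u\wedge w),b(u\vee w)$ each appear once on both sides, so they cancel and leave precisely $a(u\wedge w)+c(u\vee w)\le a(u)+c(w)$.

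The main obstacle is that this cancellation is only legitimate once $b(u\wedge w)$ and $b(u\vee w)$ are known to be finite, and these two points need not a priori belong to $\mathcal D(b)$; indeed, in the all-finite regime the algebra above already closes \emph{without} distributivity, which tells me that the role of the distributivity hypothesis must lie exactly in this extended-real bookkeeping. The plan here is to fix some $v\in\mathcal D(b)$ and use the strong-set-order relations of part (i) together with distributive identities such as $v\wedge(u\vee w)=(v\wedge u)\vee(v\wedge w)$ to push $u\wedge w$ and $u\vee w$ into the sublattice $\mathcal D(b)$ (which is a sublattice precisely because $b\preccurlyeq b$), so that every term in the summed inequality is real; in the remaining cases an infinite $b$-value should, via the hypotheses, force the right-hand side $a(u)+c(w)$ to be infinite and hence the inequality trivial. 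I expect controlling these infinite values and effective domains, rather than the core algebra, to be the delicate step.
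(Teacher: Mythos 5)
Your part (i) is correct and in fact more complete than the paper's treatment (the paper dismisses it with ``follows readily''): reading finiteness off the defining inequality gives $\mathcal D(a)\preccurlyeq\mathcal D(b)\preccurlyeq\mathcal D(c)$, and your absorption-only proof that the strong set order is transitive across a non-empty middle set is sound and needs no distributivity, exactly as you say.

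Part (ii), however, has a genuine gap, and it sits precisely at the step you flag as delicate. Your cancellation scheme (the hypotheses specialised at $(u,u\wedge w)$, $(u\vee w,w)$ and $(u,w)$) requires $b(u)$, $b(w)$, $b(u\wedge w)$, $b(u\vee w)$ all to be finite, and neither of your proposed repairs can secure this. The fallback claim ``an infinite $b$-value forces $a(u)+c(w)=\infty$'' is false: take $W=\mathbb R$ and the characteristic functionals $a=I_{(-\infty,0]}$, $b=I_{\{0\}}$, $c=I_{[0,\infty)}$; one checks $a\preccurlyeq b$, $b\preccurlyeq b$, $b\preccurlyeq c$, yet for $u=-1$, $w=1$ we have $a(u)+c(w)=0$ while $b(u)=b(w)=b(u\wedge w)=b(u\vee w)=\infty$, so all three of your inequalities read $\infty\le\infty$ and yield nothing. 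The same example shows one cannot ``push $u\wedge w$ and $u\vee w$ into $\mathcal D(b)$'': here $u\wedge w=-1\notin\mathcal D(b)=\{0\}$, and membership of these fixed elements in $\mathcal D(b)$ is not something distributivity can create. The missing idea --- and this is what the paper does --- is to change the test elements rather than to prove finiteness at the original ones: fixing $v\in\mathcal D(b)$, apply $a\preccurlyeq b$ to the pair $(u,\,w\wedge(u\vee v))$ and $b\preccurlyeq c$ to the pair $(u\vee(w\wedge v),\,w)$. Absorption still produces $a(u\wedge w)$ and $c(u\vee w)$ on the left, while part (i) guarantees that every $b$-argument that occurs lies in $\mathcal D(b)$, since $w\wedge(u\vee v)$, $u\vee(w\wedge v)$, $P:=u\vee(w\wedge(u\vee v))$ and $Q:=w\wedge(u\vee(w\wedge v))$ are all built from elements of $\mathcal D(a)$, $\mathcal D(b)$, $\mathcal D(c)$ using the strong set order relations. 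The cancellation is then closed by applying $b\preccurlyeq b$ to $(P,Q)$, and distributivity enters exactly to verify the identities $P\wedge Q=w\wedge(u\vee v)$ and $P\vee Q=u\vee(w\wedge v)$ that make the $b$-terms match. So your diagnosis of \emph{where} distributivity must act is accurate, but your scheme cannot reach that point; what is missing is this change of test elements.
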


\begin{proof}
Assertion (i) follows readily. The proof of assertion (ii) is a little bit more complicated since $a$, $b$ and $c$ may attain infinite values. Thus, let $u,w \in W$ be arbitrary. We claim that 
\begin{equation} 
a(u\wedge w) + c(u\vee w) \leq a(u) + c(w).\label{eq.reflexive.0}
\end{equation}
If $a(u) = \infty$ or $c(w) = \infty$, then this holds true trivially. Otherwise, let $v\in W$ such that $b(v) \in \mathbb R$. Then from $a\preccurlyeq b$ and $u\wedge(w\wedge(u\vee v)) = u\wedge w$ we obtain
\begin{equation} 
a(u\wedge w) + b(u\vee(w\wedge(u\vee v))) \leq a(u) + b(w\wedge(u\vee v)).\label{eq.reflexiv.1}
\end{equation}
By dual reasoning, we also obtain
\begin{equation} 
c(w\vee u) + b(w\wedge(u\vee(w\wedge v))) \leq c(w) + b(u\vee(w\wedge v)).\label{eq.reflexiv.2}
\end{equation}
All terms in \eqref{eq.reflexiv.1} and \eqref{eq.reflexiv.2} are finite (recall that $\mathcal D(a) \preccurlyeq \mathcal D(b) \preccurlyeq \mathcal D(c)$) and thus \eqref{eq.reflexive.0} follows from $b\preccurlyeq b$ and the identity 
\begin{align*}
[u\vee(w\wedge(u\vee v))] \wedge [w \wedge (u\vee(w\wedge v))] 
&= (u\vee w) \wedge (u\vee v) \wedge w \wedge (u\vee w) \wedge (u\vee v)\\
&=w\wedge(u\vee v)
\end{align*}
together with its dual counterpart.
\end{proof}

Despite $\preccurlyeq$ being no partial order on $\Gamma(W)$, we use the same terminology as for partial orders. Especially, $a^\uparrow = \{b\in \Gamma(W): a\preccurlyeq b\}$ and we have the following  notion:

\begin{definition} \label{def.upperincreasing}
A multifunction $A\colon V \to \mathcal P(\Gamma(W))$ is called \textbf{increasing upward} if $v \leq v'$ in $V$ implies $A(v) \preccurlyeq^\ast A(v')$, i.e. for all $a\in A(v)$ there is $b\in A(v')$ such that 
\[a(u\wedge w) + b(u\vee w) \leq a(u) + b(w)\quad\text{for all }u,w\in W.\]
\end{definition}

\begin{remark}
Note that parameterized problems like (QVIP$_v$) where considered for a long time in the case where $A$ is single-valued and constant, see, e.g., \cite[Lemma 2.8.1]{To98}. There, parameterized functions $a_v \colon W\to \mathbb R$ with the following monotone compatibility property were considered:
\[a_{v}(u\wedge w) + a_{v'}(u\vee w) \leq a_{v}(u) + a_{v'}(w) \quad\text{for all }u,w\in W\text{ and for $v \leq v'$ in $V$}.\]
Obviously, this is a special case of Definition \ref{def.upperincreasing} and consequently, our next lemma generalizes the result of \cite{To98}.
\end{remark}

Now, let us provide the monotonic dependence of the solution operator on the data, which gives us a guideline for the construction of the multifunctions $A$, $K$ and $T$. To this end, we consider a second quasi-variational inclusion problem (QVIP') with data $A'$, $K'$ and $T'$ and solution operator $S'$. 

\begin{lemma}\label{lem.dependence}
Let for all $u\in W$ and $v,v'\in V$ such that $u\in S(v)$ and $v\leq v'$ the following relations hold true:
\[A(u,v) \preccurlyeq^\ast A'(u,v'),\quad K(u,v) \subset K'(u,v'),\quad T'(u,v') \subset T(u,v), \quad T'(u,v') \leq^\ast u. \]
Then $S(v) \subset S'(v')$. 
\end{lemma}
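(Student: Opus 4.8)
The plan is to unpack the definitions of the two solution operators and to push a solution of (QVIP$_v$) forward to a solution of (QVIP'$_{v'}$) by using the four hypotheses one at a time. So fix $u\in S(v)$ and $v\leq v'$. By Definition \ref{def.solution_operator} we have $u\in K(u,v)$, and there is some $a\in A(u,v)$ with $a(u)\in\mathbb R$ such that $a(w)\geq a(u)$ for all $w\in T(u,v)$. To prove $u\in S'(v')$ I must produce admissibility $u\in K'(u,v')$ together with a function $a'\in A'(u,v')$ satisfying $a'(w)\geq a'(u)$ for all $w\in T'(u,v')$.

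Admissibility is immediate, since $u\in K(u,v)\subset K'(u,v')$. For the variational inequality I would first invoke $A(u,v)\preccurlyeq^\ast A'(u,v')$ to select $a'\in A'(u,v')$ with $a\preccurlyeq a'$. The key step is then to test this submodularity relation at the right pair of points. Let $w\in T'(u,v')$ be arbitrary. By $T'(u,v')\subset T(u,v)$ we have $w\in T(u,v)$, so that $a(w)\geq a(u)$; and by $T'(u,v')\leq^\ast u$ we have $w\leq u$, so that $u\wedge w=w$ and $u\vee w=u$. Applying the defining inequality of $a\preccurlyeq a'$ with the choice $x=u$, $y=w$ therefore gives
\[a(w)+a'(u)=a(u\wedge w)+a'(u\vee w)\leq a(u)+a'(w),\]
which rearranges to $a'(w)-a'(u)\geq a(w)-a(u)\geq 0$. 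Since $w\in T'(u,v')$ was arbitrary, $a'$ witnesses the primed variational inequality, and together with admissibility we conclude $u\in S'(v')$, i.e. $S(v)\subset S'(v')$.

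The conceptual heart of the argument is the observation that the test functions of the primed problem all lie below $u$: this is exactly what collapses the lattice operations so that the submodular relation $a\preccurlyeq a'$ upgrades the old inequality $a(w)\geq a(u)$ into the new one $a'(w)\geq a'(u)$. The remaining care concerns the possibility of infinite values, and this is the step I expect to be most delicate. Specifically, one must ensure $u\in\mathcal D(a')$, so that $a'(u)\in\mathbb R$ and the rearrangement above does not amount to subtracting $\infty$ from $\infty$. Here I would exploit that $a\preccurlyeq a'$ entails $\mathcal D(a)\preccurlyeq\mathcal D(a')$ (as recorded in Proposition \ref{prop.transitive}) and combine this with $a(u)\in\mathbb R$ and the fact that the relevant test points satisfy $w\leq u$, keeping all four quantities $a(u),a(w),a'(u),a'(w)$ under control; once finiteness at $u$ is secured, the displayed chain of inequalities is unambiguous.
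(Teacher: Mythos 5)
Your argument is, in its core, exactly the paper's proof: admissibility from $K(u,v)\subset K'(u,v')$, selection of $a'\in A'(u,v')$ with $a\preccurlyeq a'$ via the hypothesis $A(u,v)\preccurlyeq^\ast A'(u,v')$, and then, for $w\in T'(u,v')$, the combination of $w\in T(u,v)$ (so $a(w)\ge a(u)$) with $w\le u$ (so $u\wedge w=w$ and $u\vee w=u$) to turn the submodularity relation into $a(w)+a'(u)\le a(u)+a'(w)$. Up to notation ($a'$ versus the paper's $b$) this is essentially verbatim the published argument, so the approach is the right one.

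The divergence is in the finiteness step that you yourself flag as delicate, and there your sketch does not close the gap. Proposition \ref{prop.transitive}(i) gives $\mathcal D(a)\preccurlyeq\mathcal D(a')$, but from $u\in\mathcal D(a)$ and $y\in\mathcal D(a')$ this relation only produces $u\vee y\in\mathcal D(a')$: it yields finiteness of $a'$ at points \emph{above} $u$, never at $u$ itself. Likewise, the inequality $a(w)+a'(u)\le a(u)+a'(w)$ with $w\le u$ can transfer finiteness down to $a'(u)$ only if some test point $w\in T'(u,v')$ satisfies $a'(w)<\infty$, which the hypotheses nowhere provide. The paper sidesteps the problem rather than solving it your way: it never forms the difference $a'(w)-a'(u)$ (which is the only reason you need $a'(u)\in\mathbb R$ in advance), but subtracts only the finite value $a(u)$, obtaining the chain $a'(u)\le a'(u)+a(w)-a(u)\le a(u)+a'(w)-a(u)=a'(w)$, which is valid in $\mathbb R\cup\{\infty\}$; only afterwards does it address membership, concluding $u\in\mathcal D(a')$ from the properness of $a'\in\Gamma(W)$. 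You should restructure the end of your proof accordingly: derive $a'(u)\le a'(w)$ without ever subtracting a possibly infinite quantity, and treat $u\in\mathcal D(a')$ as a concluding observation rather than as a prerequisite of the computation. (Admittedly, even the paper's final appeal to properness is terse --- finiteness of $a'$ somewhere in $W$ is not by itself finiteness at $u$ --- but in the applications of the lemma in Section \ref{sec.application} the selected $a'$ shares its effective domain with $a$, so the point is harmless there.)
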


\begin{proof}
Let $v \leq v' \in V$ and $u \in S(v)$. Then it holds $u\in K(u,v) \subset K'(u,v')$ and there is $a\in A(u,v)$ such that
\begin{equation}
    a(w)-a(u) \geq 0\quad\text{for all } w\in T(u,v). \label{eq.S.perm.0}
\end{equation}
Since $A(u,v) \preccurlyeq^\ast A'(u,v')$, there is $b\in A'(u,v')$ such that $a\preccurlyeq b$, that is
\[a(u\wedge w) + b(u\vee w) \leq a(u) + b(w)\quad\text{for all } w\in W.\]
Taking $w\in T'(u,v') \subset T(u,v)$, we have $w\leq u$ and thus $u\wedge w = w$ and $u\vee w = u$, resulting in
\begin{equation}
    a(w) + b(u) \leq a(u) + b(w). \label{eq.S.perm.1}
\end{equation}
Using \eqref{eq.S.perm.0} and \eqref{eq.S.perm.1}, we conclude
\begin{equation*}
    b(u) \leq b(u) + a(w) - a(u) \leq a(u) + b(w) - a(u) = b(w),
\end{equation*}
and since $b(w) < \infty$ for at least one $w\in W$, we obtain $u\in\mathcal D(b)$ and $u\in S'(v')$.

\end{proof}

As an immediate consequence of Lemma \ref{lem.dependence} for the case (QVIP) $=$ (QVIP') we obtain conditions on $A$, $K$ and $T$ under which $S$ is permanent upward; especially, $v\mapsto A(u,v)$ should be asked to be increasing upward. Furthermore, one can utilize the fact that the sum of two increasing upward multifunctions is again increasing upward, and there is even a connection to positive linear functionals. To this end, we have to assume that $W$ is an \textbf{ordered linear space} (i.e. $W$ is a linear space over $\mathbb R$ such that $u\leq w$ is equivalent to $0\leq w-u$, and $0\leq \lambda$ in $\mathbb R$ and $0\leq u \in W$ imply $0\leq\lambda w$). Then for $u \in W$ we define $u^+ := u\wedge 0$ and obtain the identities
\begin{equation} 
u - (u\wedge w) = (u-w)^+ = (u\vee w) - w. \label{eq.identity}
\end{equation}
By $W'$ we denote the linear space of all linear functionals $a\colon W \to \mathbb R$, and for $a\in W'$ and $u\in W$ we write $\langle a,u\rangle := a(u)$ and say that $a$ is \textbf{positive} if $\langle a,u^+\rangle \geq 0$ for all $u\in W$.

\begin{proposition}\label{prop.preccurly.positive}
For $a,b \in W'$ it holds $a\preccurlyeq b$ if and only if $a-b$ is positive.
\end{proposition}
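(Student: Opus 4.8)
The plan is to unwind the defining inequality of $a \preccurlyeq b$ using only the linearity of $a$ and $b$ together with the identity \eqref{eq.identity}, turning it into a statement purely about the positive part $(\cdot)^+$. First I would fix arbitrary $u,w\in W$ and, since $a$ and $b$ are linear, rewrite the inequality
\[a(u\wedge w) + b(u\vee w) \leq a(u) + b(w)\]
equivalently as $a(u\wedge w - u) \leq b(w - u\vee w)$, grouping the $a$-terms and the $b$-terms on opposite sides.

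Next I would apply \eqref{eq.identity}, which gives $u - (u\wedge w) = (u-w)^+ = (u\vee w) - w$, and hence $u\wedge w - u = -(u-w)^+$ as well as $w - u\vee w = -(u-w)^+$. By linearity the inequality then reads $-a\big((u-w)^+\big) \leq -b\big((u-w)^+\big)$, that is $\langle a-b,(u-w)^+\rangle \geq 0$. Thus $a\preccurlyeq b$ is equivalent to requiring $\langle a-b,(u-w)^+\rangle \geq 0$ for all $u,w\in W$.

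Finally I would observe that, as $(u,w)$ ranges over $W\times W$, the difference $u-w$ ranges over all of $W$ (e.g. taking $w=0$), and conversely every element of $W$ arises in this way. Therefore the preceding condition is equivalent to $\langle a-b,z^+\rangle \geq 0$ for all $z\in W$, which is by definition the positivity of $a-b$. Since every step in this chain is an equivalence, both implications of the proposition are obtained simultaneously, so there is no need to argue the two directions separately.

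I do not expect a genuine obstacle here, as the statement is essentially a direct computation. The only points requiring care are the sign bookkeeping when moving terms across the inequality and the correct use of \eqref{eq.identity}, together with the remark that positivity of $a-b$ is tested on $z^+$ for \emph{arbitrary} $z\in W$; one must therefore confirm that $u-w$ sweeps out all of $W$ rather than merely a proper subset, which is immediate since $W$ is a linear space.
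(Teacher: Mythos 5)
Your proof is correct and follows essentially the same route as the paper: both use linearity of $a$ and $b$ together with the identity \eqref{eq.identity} to rewrite the defining inequality of $a\preccurlyeq b$ as $\langle a-b,(u-w)^+\rangle \geq 0$, and then note that $u-w$ ranges over all of $W$. Your last observation merely spells out the step the paper dismisses with ``the claim follows readily,'' which is a fine addition but not a different argument.
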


\begin{proof}
Let $u,w \in W$. Then from the identities in \eqref{eq.identity} and the linearity of $a$ and $b$ we obtain
\begin{align*}
    a(u\wedge w) + b(u\vee w) \leq a(u) + b(w)  
    \quad & \Longleftrightarrow\quad 
    \langle b, (u\vee w) - w\rangle \leq \langle a,u - (u\wedge w)\rangle \\
    \quad &\Longleftrightarrow\quad 
    \langle a-b, (u-w)^+ \rangle \geq 0.
\end{align*}
The claim follows readily.
\end{proof}

The calculations above suggest the following notions:
\begin{itemize}
    \item[(i)] For $a, b \in \Gamma(W)$ and $u,w\in W$ we say that $a\preccurlyeq b$ w.r.t. $(u,w)$ if
    \[a(u\wedge w) + b(u\vee w) \leq a(u) + b(w).\]
    \item[(ii)] A multifunction $A\colon W \to \mathcal P(W')$ is called \textbf{T-monotone} if for all $u,w\in W$ and all $a\in A(u)$ and $b\in A(w)$ it holds 
    \[\langle a-b, (u-w)^+\rangle \geq 0.\]
\end{itemize}

\begin{corollary}
Let $A\colon W \to \mathcal P(W')$ be T-monotone and $a \in A(u)$, $b \in A(w)$. Then $a \preccurlyeq b$ w.r.t. $(u,w)$.
\end{corollary}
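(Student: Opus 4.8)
The plan is to read the statement off directly from the definition of T-monotonicity, combined with the pointwise computation already performed in Proposition~\ref{prop.preccurly.positive}. First I would apply the defining inequality of T-monotonicity to the elements at hand: since $A$ is T-monotone and $a\in A(u)$, $b\in A(w)$, we obtain at once
\[\langle a-b,(u-w)^+\rangle\geq 0.\]

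Next I would convert this positivity inequality into the desired submodular-type inequality. Invoking the identities in~\eqref{eq.identity}, namely $u-(u\wedge w)=(u-w)^+=(u\vee w)-w$, together with the linearity of $a$ and $b$, gives the equivalence
\[\langle a-b,(u-w)^+\rangle\geq 0 \quad\Longleftrightarrow\quad a(u\wedge w)+b(u\vee w)\leq a(u)+b(w),\]
and the inequality on the right is exactly the statement that $a\preccurlyeq b$ w.r.t.\ $(u,w)$.

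The single point worth noticing is that the chain of equivalences in the proof of Proposition~\ref{prop.preccurly.positive} is carried out for an arbitrary but \emph{fixed} pair $(u,w)$ and never quantifies over all such pairs; it therefore already establishes, pointwise, the equivalence between $a\preccurlyeq b$ w.r.t.\ $(u,w)$ and the positivity inequality above. The corollary is then nothing but this equivalence read in the direction from positivity to the pointwise relation, supplied with the inequality delivered by T-monotonicity. Accordingly I expect no genuine obstacle: the result is a direct unpacking of the definitions, and the only care required is to match the pair $(u,w)$ for which T-monotonicity yields positivity with the pair for which the submodular inequality is sought — here these coincide by construction.
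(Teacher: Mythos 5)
Your proof is correct and is exactly the argument the paper intends: the corollary is an immediate consequence of the pointwise equivalence established in the proof of Proposition~\ref{prop.preccurly.positive}, applied to the inequality $\langle a-b,(u-w)^+\rangle\geq 0$ furnished by T-monotonicity. Your observation that the equivalence in that proposition holds for each fixed pair $(u,w)$ is precisely the point that makes the corollary follow without further work.
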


\begin{remark}
For the use of T-monotone functions $A\colon W \to W'$ see, e.g., \cite[Section 4:5]{Ro87}. The use of T-monotone multifunctions in the context of variational inclusions is standard, see, e.g., \cite{Le15}. 
\end{remark}

\begin{remark}
We have seen that our definition of increasing upward mappings is productive. In the special case $v = v'$ we obtain the following notion: 
A set $A\subset \Gamma(W)$ is called {upper submodular} if $A \preccurlyeq^\ast A$, that is, if for all $a\in A$ there is $b\in B$ such that $a(w\wedge w') + b(w\vee w') \leq a(w) + b(w')$ for all $w,w'\in W$.

\end{remark}

\section{Elliptic Quasi-Variational Inclusions with Bifunctions}
\label{sec.application}

Finally, let us apply the abstract results in order to find solutions of an involved elliptic quasi-variational inclusion problem whose lower order terms are given by bifunctions. To this end, we will presume some knowledge of functional analysis (for more information, we refer to, e.g., \cite{CaLe21,Ze90}) and use the following setting throughout this section:

Let $\Omega\subset \mathbb R^N$, $N\geq 1$, be a bounded domain with Lipschitz boundary. We equip $\Omega$ with the Lebesgue measure and denote by $L^0(\Omega)$ the linear space of all (equivalence classes of) measurable functions $u\colon \Omega \to \mathbb R$ (where $\mathbb R$ is equipped with the Borel measure). We fix $p\in (1,\infty)$ and $q\in (1,p^\ast)$, where $p^\ast$ denotes the {critical Sobolev exponent} associated with $p$ and the dimension $N$, given by $p^\ast = Np/(N-p)$ for $p<N$, and $p^\ast$ arbitrarily large for $p\geq N$. Furthermore, $q'$ denotes the {H\"older conjugate} of $q$. In dependence of these constants, we introduce the following subspaces of $L^0(\Omega)$:
\begin{equation*}
L := L^{q'}(\Omega),\qquad V := L^{p^\ast}(\Omega),\qquad W := W^{1,p}_0(\Omega),
\end{equation*}
where $L^r(\Omega)$ is the usual Lebesgue space of $r$-th power integrable functions, and $W^{1,p}_0(\Omega)$ the usual Sobolev space with zero boundary values in the sense of traces. We will denote the norm of $L^r(\Omega)$ by $\norm{\cdot}_r$, and we note that $\norm{\nabla u}_p$ gives an equivalent norm on $W$ (where $\nabla u$ is the weak gradient of $u$). The spaces are chosen in such a way that the embedding $W\hookrightarrow V$ is continuous and that the embedding $i_q: W \hookrightarrow L^\ast$ and its adjoint $i_q^\ast\colon L \hookrightarrow W^\ast$ are compact (with respect to the norm topology). There, $L^\ast$ and $W^\ast$ denote the topological dual of $L$ and $W$, respectively, and $i_q^\ast$ is the natural embedding, i.e. for $\eta \in L$ we have
\[\langle i_q^\ast\eta, u\rangle := \int_\Omega \eta u := \int_\Omega \eta(x) u(x)\,\mathrm{d}x\quad\text{for all }u\in W.\]
N ote that we suppress arguments in the notation of integrals.

Concerning order, we equip $\mathbb R$ with the usual partial order $\leq$ and $L^0(\Omega)$ (or even the space of (equivalence classes of) functions $u\colon \Omega \to \mathbb R$) and all its subspaces with the natural partial order $\leq$, defined by $u\leq v$ if $u(x)\leq v(x)$ for a.e. $x\in\Omega$, by which $L^0(\Omega)$ is known to be an  ordered linear space with lattice structure and distributive sub-lattices $L$, $V$ and $W$. 

Concerning topology, besides the norm topology we will often use the weak topology on the reflexive Banach spaces $L$, $V$ and $W$. If a sequence $(u_n)$ converges weakly to some function $u$, we write $u_n\rightharpoonup u$. Recall that each interval $u^\uparrow$ is closed and convex and thus weakly closed, such that especially $V$ and $W$ are ordered topological spaces with respect to the weak topology. Recall further, that a subset $K$ of a reflexive Banach space is weakly compact and weakly closed if and only if it is norm bounded and sequentially closed (via Banach-Alaoglu and Eberlein-\v Smulian), such that (by monotonicity of the integral) each order interval $[\ul u,\ol u]$ in $V$ is weakly compact and weakly closed. Note, however, that such intervals in $W$ are not bounded and thus not weakly compact since our partial order is not suitable to control the derivatives of a Sobolev function.

In order to formulate the (QVIP) of this section, let $E\colon W \to W^\ast$ be an elliptic differential operator of Leray-Lions type (for a precise description see Assumption \ref{ass.E} below), $f\colon \Omega \times \mathbb R \times \mathbb R \to \mathcal P(\mathbb R)$ a multi-valued bifunction, and $K\colon W\times V\to \mathbb R\cup\{\infty\}$ a bifunctional. Then the (QVIP) reads as follows:
\begin{equation}
\begin{split}
\label{eq.problem}
\text{Find }u\in W\text{ }&\text{s.t. there is } \eta \in L \text{ with }\eta(x) \in f(x,u(x),u(x))\text{ for a.e. $x\in\Omega$}\\
&\text{s.t. } \langle Eu,w -u \rangle + \int_\Omega \eta(w-u) + K(w,u) - K(u,u) \geq 0\quad \text{for all }w\in W.
\end{split}
\end{equation}

\begin{remark}
This problem was considered in \cite{Le15}, where a rather involved proof of the existence result was given. By using the abstract framework developed above we are able to reduce the analytic work to be done to a minimum. Nevertheless, not all difficulties can be skipped. For the sake of completeness, we are going to present full arguments in our notation although most of the ideas are given in \cite{CaLe15,CaTi18,Le15,Le14,Ti19} and the papers cited there.
\end{remark}

To shorten the notation and to bring this problem in the form (QVIP) presented in Section \ref{sec.variational}, we introduce the mapping
\[A\colon W\times V \to \mathcal P(\Gamma(W)),\quad A(u,v) := E(u) + i^\ast_q F(u,v) + K(\cdot,v),\]
where $E$ is the elliptic differential operator from above and
\[F\colon L^0(\Omega)\times L^0(\Omega) \to \mathcal P(L),\quad F(u,v) := \{\eta \in L: \eta \subset f(\cdot,u,v)\},\]
where $\eta \subset f(\cdot,u,v)$ means $\eta(x) \in f(x,u(x),v(x)$ for a.e. $x\in\Omega$. Then $u\in W$ is a \textbf{solution} of \eqref{eq.problem} if and only if
\[a(w) - a(u) \geq 0\quad\text{for some }a\in A(u,u) \text{ and all }w\in W.\]
There, implicitly, only functions $u\in W$ with $u \in \mathcal D(K(\cdot,v))$ are admissible, and every function $w\in W$ is a test function.

In order to define subsolutions, we modify this setting by expanding $A(u,u)$ and shrinking the space of test functions so that it is as easy as possible for subsolutions to exist. In accordance with \cite{Le15}, we define
\[\underline A\colon W\times V \to \mathcal P(\Gamma(W)),\quad \ul A(u,v) := E(u) + i^\ast_q F(u,v) + K(\cdot,v)^\downarrow\]
(where $K(\cdot,v)^\downarrow = \{k\in\Gamma(W): k \preccurlyeq K(\cdot,v)\}$) and
\[\ul T\colon W\times V \to \mathcal P(W),\quad \ul T(u,v) := u \wedge \mathcal D(K_v).\]
Then, $\ul u\in W$ is called \textbf{subsolution} of \eqref{eq.problem} if and only if
\[a(w) - a(\ul u) \geq 0\quad\text{for some }a\in \ul A(\ul u,\ul u) \text{ and all }w\in \ul T(\ul u,\ul u).\]
Dually, a \textbf{supersolution} $\ol u \in W$ of \eqref{eq.problem} is defined as solution of the quasi-variational inclusion with $K_v^\uparrow$ instead of $K_v^\downarrow$ and $\ol T(u,v) := u \vee \mathcal D(K_v)$ instead of $\ul T(u,v)$.

Next, let us give precise assumptions on the data which guarantee that all conditions of Theorem \ref{thm.greatest.fixedpoint} are fulfilled, so that Problem \eqref{eq.problem} has greatest and smallest solutions between each ordered pair of sub-supersolutions.

\begin{assn}
Since we do not provide sub-supersolutions, we have to assume:
\begin{itemize}
\itemsep0pt
\item[(S)] There is a subsolution $\ul u$ and a supersolution $\ol u$ such that $\ul u \leq \ol u$. 
\end{itemize}
\end{assn}

\begin{remark}
See, e.g., \cite{Le15} for the construction of sub-supersolutions under appropriate conditions by use of the monotone dependence provided in Lemma \ref{lem.dependence}. The key idea is to use solutions of simpler variational inclusions.
\end{remark}

In the following, we will often restrict our consideration to the interval $[\ul u,\ol u]$. Although $\ul u,\ol u \in W$ by definition, this interval of sub-supersolutions is always to be considered in $V$ such that it is non-empty, bounded, closed and weakly compact. Especially, if we merely assume $v\in[\ul u,\ol u]$, then $v$ may not have weak derivatives.

\begin{assn}
\label{ass.E}
Let $e\colon \Omega \times \mathbb R^N \to \mathbb R^N$ be a function defining the (single-valued) differential operator $E\colon W\to W^\ast$ of Leray-Lions type via 
\[Eu := -\operatorname{div} e(\cdot,\nabla u)\quad\text{resp.}\quad \langel Eu,w\rangle := \int_\Omega e(\cdot,\nabla u)\nabla w\quad\text{for all }w\in W.\]
The following standard assumptions on $e$ are meant to hold for a.e. $x\in\Omega$ and all $\xi\in\mathbb R^N$.

\begin{itemize}
\itemsep0pt
\item[(E1)] The function $e$ is a \textbf{Carath\'eodory function}, i.e. $x\mapsto e(x,\xi)$ is measurable on $\Omega$ and $\xi \mapsto e(x,\xi)$ is continuous on $\mathbb R^N$.
\item[(E2)] The function $e$ is $\boldsymbol{p}$\textbf{-coercive} in the second argument, i.e. there is a constant $\alpha_2 > 0$ and a function $e_2\in L^1(\Omega)$ such that $e(x,\xi) \xi \geq \alpha_2 |\xi|^p - e_2(x)$.
\item[(E3)] There exists a constant $\alpha_3 > 0$ and a function $e_3 \in L^{p'}(\Omega)$ such that $e$ satisfies the \textbf{growth} condition $|e(x,\xi)| \leq \alpha_3 |\xi|^{p-1} + e_3(x)$.
\item[(E4)] The function $e$ is \textbf{monotone} in the second argument, i.e. it holds, for all $\xi,\xi'\in \mathbb R^N$, $(e(x,\xi) - e(x,\xi'))(\xi-\xi') \geq 0$. 
\end{itemize}

\end{assn}

Under conditions (E1)---(E4) the operator $E\colon W\to W^\ast$ is known to be well-defined, bounded, coercive, monotone and continuous, and thus \textbf{pseudomonotone} (i.e. $u_n\rightharpoonup u$ in $W$ and $\limsup_n \langle Eu_n,u_n-u\rangle \leq 0$ imply $\langle Eu,w-u\rangle \geq \limsup_n \langle Eu_n,w-u_n\rangle$ for all $w\in W$). An example is given by the $p$-Laplacian defined by $e(x,\xi) = |\xi|^{p-2}\xi$. See, e.g., \cite[Section 2.4]{Roubicek} for an exhaustive treatment.

\begin{assn}

Let $f\colon \Omega\times\mathbb R\times \mathbb R \to \mathcal P(\mathbb R)$ be a multifunction whose values are compact intervals. If not specified otherwise, the following conditions are meant to hold for a.e. $x\in\Omega$ and all $s,t\in\mathbb R$.

\begin{itemize}
\itemsep0pt
\item[(F1)] The function $(x,t)\mapsto f(x,s,t)$ is \textbf{superpositionally measurable}, i.e. for all $v\in L^0(\Omega)$ the multivalued function $x\mapsto f(x,s,v(x))$ is \textbf{measurable}, i.e. the set $\{x\in\Omega: f(x,s,v(x)) \subset U\}$ is measurable for each open $U\subset\mathbb R$.

\item[(F2)] The function $s\mapsto f(x,s,t)$ is \textbf{upper semicontinuous} on $\mathbb R$, i.e. the set $\{s\in\mathbb R: f(x,s,t) \subset U\}$ is open for each open $U\subset\mathbb R$.

\item[(F3)] The function $t\mapsto f(x,s,t)$ is \textbf{decreasing} on $\mathbb R$, i.e. $f(x,s,t)\leq^\ast f(x,s,t')$ and $f(x,s,t') \geq^\ast f(x,s,t)$ for all $t' \in \mathbb R$ with $t'\leq t$.

\item[(F4)] There is some $b_4\in L$ such that $f$ satisfies the \textbf{growth} condition
\begin{equation*}
|y| \leq b_4(x)\quad\text{for all  $y\in f(x,s,t)$ and all $s,t\in [\underline u(x),\overline u(x)]$.}
\end{equation*}

\end{itemize}

\end{assn}

\begin{rem}
Since $f$ may not depend continuously on $t$, it may be a tedious task to ensure (F1). However, if one only assumes (F4) and that $f$ is measurable in $x$ (which follows at once from (F1) and is a standard assumption if $f$ does not depend on $t$), then there may be a fairly simple criterion which ensures (F1).
\end{rem}

From (F4) it follows that all measurable selections $\eta \subset f(\cdot,u,v)$ with $u,v \in [\ul u,\ol u]$ belong to $L$, which means that $F(u,v)$ is non-empty if $f$ is \textbf{weakly superpositionally measurable}, i.e. if $f(\cdot,u,v)$ has at least one measurable selection whenever $u,v\in L^0(\Omega)$. However, this follows readily from (F1) and (F2) and \cite[Lemma 7.1]{Apea95}. Thus, $F$ has non-empty values if restricted to $[\ul u,\ol u]$.

Further, we will need the following weak closure property of $f$, which follows at once from (F1) and \cite[Proposition 2]{CaTi18}:

\begin{proposition}\label{prop.F.closed}
Let $v\in L^0(\Omega)$, $\eta_n\rightharpoonup \eta$ in $L^1(\Omega)$ and $u_n\to u$ in $L^0(\Omega)$ pointwise a.e. If $\eta_n \subset f(\cdot,u_n,v)$, then $\eta \subset f(\cdot,u,v)$.
\end{proposition}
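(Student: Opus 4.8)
The plan is to establish the weak closure property of the multivalued superposition operator $f$ by reducing it to the cited results on measurability and weak convergence. First I would observe that the statement concerns a sequence $\eta_n \subset f(\cdot,u_n,v)$ with $\eta_n \rightharpoonup \eta$ in $L^1(\Omega)$ and $u_n \to u$ pointwise a.e., and we must show $\eta(x) \in f(x,u(x),v(x))$ for a.e.\ $x\in\Omega$. The natural tool is a Mazur-type argument combined with the upper semicontinuity of $f$ in its second argument, and indeed \cite[Proposition 2]{CaTi18} is designed precisely to convert weak $L^1$-convergence of selections, together with pointwise convergence of the inner functions, into a pointwise inclusion for the limit.

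The key steps would be as follows. First I would invoke (F1) to guarantee that $x\mapsto f(x,u(x),v(x))$ is a measurable multifunction, so that the notion of a measurable selection makes sense for the limit and the cited proposition is applicable. Next, I would appeal to the upper semicontinuity (F2) in the second argument, which ensures that the graph of $s \mapsto f(x,s,v(x))$ has the closedness property needed for a limiting argument: if $\eta_n(x) \in f(x,u_n(x),v(x))$ and $u_n(x)\to u(x)$, then the closed-valuedness (the values of $f$ are compact intervals) plus upper semicontinuity force any accumulation point of $\eta_n(x)$ to lie in $f(x,u(x),v(x))$. Finally, I would apply \cite[Proposition 2]{CaTi18} directly to pass from the weak $L^1$-convergence $\eta_n\rightharpoonup \eta$ to the inclusion $\eta \subset f(\cdot,u,v)$; the role of the Mazur lemma here is to replace weak convergence by strong convergence of suitable convex combinations, after which a subsequence converges pointwise a.e.\ and the pointwise inclusion can be extracted.

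The main obstacle I anticipate lies in the interplay between the pointwise inclusion and the weak (rather than strong) convergence of the $\eta_n$: weak $L^1$-convergence does not by itself yield pointwise convergence of a subsequence, so one cannot simply pass to the limit inside the inclusion $\eta_n(x)\in f(x,u_n(x),v(x))$. This is exactly where the Mazur-lemma mechanism embedded in \cite[Proposition 2]{CaTi18} is essential: one forms convex combinations $\widetilde\eta_k = \sum_{j} \lambda_j^{(k)}\eta_j$ converging strongly in $L^1$, extracts a pointwise a.e.\ convergent subsequence, and then uses the convexity of the values $f(x,\cdot,\cdot)$ (again compact intervals, hence convex) together with upper semicontinuity to conclude that the limit of the convex combinations still lies in $f(x,u(x),v(x))$. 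Since $f$ is here assumed to have convex values and to be upper semicontinuous in the relevant argument, the hypotheses of the cited proposition are met, and the conclusion follows.

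\begin{proof}
The claim is an immediate application of \cite[Proposition 2]{CaTi18} once the measurability of the limiting multifunction is secured. By (F1), the multifunction $x\mapsto f(x,u(x),v(x))$ is measurable. Since the values of $f$ are compact (hence convex) intervals and $s\mapsto f(x,s,t)$ is upper semicontinuous by (F2), the hypotheses of \cite[Proposition 2]{CaTi18} are satisfied for the sequence $\eta_n\subset f(\cdot,u_n,v)$ with $\eta_n\rightharpoonup\eta$ in $L^1(\Omega)$ and $u_n\to u$ pointwise a.e.\ in $\Omega$. Applying that proposition yields $\eta\subset f(\cdot,u,v)$, as required.
\end{proof}
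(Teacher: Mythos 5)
Your proposal is correct and matches the paper exactly: the paper offers no proof beyond the remark that the statement ``follows at once from (F1) and \cite[Proposition 2]{CaTi18}'', which is precisely the reduction you make. Your additional observations --- that (F2) and the compact-interval (hence closed convex) values are what make the cited proposition applicable, and that its internal mechanism is a Mazur-lemma argument --- are accurate elaborations of the same route rather than a different one.
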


(Note that the convergence in Proposition \ref{prop.F.closed} holds true for at least a subsequence if $\eta_n \rightharpoonup \eta$ in $L^r(\Omega)$ and $u_n \to u$ in $L^s(\Omega)$ for some $r,s \geq 1$ since $\Omega$ is bounded.)

\begin{assumption}

Let $K\colon W \times V\to \mathbb R\cup\{\infty\}$ be a bifunctional satisfying the following conditions:
\begin{itemize}
\item[(K1)] For all $v \in [\ul u,\ol u]$ 
the function $K_v := K(\cdot,v)\colon W \to \mathbb R\cup\{\infty\}$ is proper (i.e. $\mathcal D(K_v)\neq \emptyset$, thus $K_v\in\Gamma$), convex and lower semicontinuous.
\item[(K2)] The mapping $v\mapsto K_v$ is {increasing}, i.e. $v\leq v'$ in $[\ul u,\ol u]$ implies $K_{v} \preccurlyeq  K_{v'}$.
\item[(K3)] For each $v\in [\ul u,\ol u]$ there is some constant $c_3>0$ such that for all $u\in [\ul u,\ol u]$ 
it holds $K_v(u) \geq -c_3(\|\nabla u\|_p^{p-1}+1)$.
\end{itemize}
\end{assumption}

It is well-known that (K1) implies that $K_v$ is weakly sequentially lower semicontinuous and that $\partial K_v$ (the subdifferential of $K_v$) is maximal monotone.
As an example, we have $K_v = I_{K(v)}$, the characteristic functional of a non-empty closed convex set $K(v)\subset W$. In this case, $\mathcal D(K_v) = K(v)$ and (K2) becomes $K({v}) \preccurlyeq K({v'})$ for $v \leq v'$, while (K3) holds trivially.

\begin{remark}
Note that we slightly changed the assumptions on the data given in \cite{Le15}. 
In particular, we do not assume that $(x,s) \mapsto f(x,s,t)$ is superpositionally measurable, 
but only (by (F1) and (F2)) that $(x,s) \mapsto f(x,s,t)$ is weakly superpositionally measurable. Furthermore, our assumption (K3) replaces the technical assumptions (J3) and (J4) of \cite{Le15} which is possible by separating topological and order-theoretical properties. Furthermore, in order to present the main ideas, we restrict our considerations to the space $W^{1,p}_0(\Omega)$. At the cost of some slightly more involved calculation we could treat also quasi-variational inclusions with non-zero boundary conditions.
\end{remark}

We are now in a position to check if all the conditions of Theorem \ref{thm.greatest.fixedpoint} are fulfilled with respect to $D:= [\ul u,\ol u]$, $V$ and $W$ as given in this section, $S$ being the solution operator associated with $A$, $K \equiv W$ and $T \equiv W$, and $\ul S$ being the solution operator associated with $\underline A$, $\ul K \equiv W$ and $\underline T$ as described in Definition \ref{def.solution_operator}. To be precise, in what follows we restrict both the domain and the values of $S$ and $\ul S$ to $D$, since we are only interested in solutions between $\ul u$ and $\ol u$.

First of all, $D$, $V$ and $W$ satisfy all required conditions, and we have not only $\ul u \leq^\ast \ul S(\ul u)$, but even $\ul u \in \fix \ul S$ (which will be relevant in the proof of Theorem \ref{theorem.aux.existence}). It thus remains to provide the topological properties of $S$ and the order-theoretical properties of $\ul S$ and its connection to $S$. Let us start with the properties of $S$ which are proved in a standard way as if we would have no bifunctions:

\begin{proposition}
All values of $S\colon D \to \mathcal P(D\cap W)$ are weakly compact (and thus weakly closed) in $W$.
\end{proposition}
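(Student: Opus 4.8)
The plan is to fix $v\in D$ and show that $S(v)$ is norm-bounded and weakly sequentially closed in $W$; by the Banach--Alaoglu and Eberlein--\v Smulian characterization recalled above, this already yields weak compactness and weak closedness. Since in this section $K\equiv W$ and $T\equiv W$, the admissibility constraint $u\in K(u,v)$ and the restriction of test functions are vacuous, so $u\in S(v)$ means precisely that $u\in D\cap W$ and that there is $\eta\in F(u,v)$ with
\[\langle Eu,w-u\rangle+\int_\Omega \eta(w-u)+K(w,v)-K(u,v)\ge 0\quad\text{for all }w\in W.\]

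\textbf{Boundedness.} First I would fix a test function $w_0\in\mathcal D(K_v)$, which exists by (K1), and insert $w=w_0$. Rearranging and bounding $\langle Eu,u\rangle$ from below by (E2), $\langle Eu,w_0\rangle$ from above by (E3) together with H\"older's inequality, the integral term by (F4) (so $\norm{\eta}_{q'}\le\norm{b_4}_{q'}$) and the boundedness of $\norm{u}_q$ on $D\subset V$, and finally $-K(u,v)$ from above by (K3), I obtain an estimate of the form $\alpha_2\norm{\nabla u}_p^p\le C(1+\norm{\nabla u}_p^{p-1})$, with a constant $C$ depending only on $v$, $w_0$ and the data. Since $p-1<p$, this bounds $\norm{\nabla u}_p$, hence $S(v)$ is bounded in $W$.

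\textbf{Weak sequential closedness.} Let $u_n\in S(v)$ with $u_n\rightharpoonup u$ in $W$, with associated selections $\eta_n\in F(u_n,v)$; since the limit $u$ is fixed it suffices to check $u\in S(v)$ along a subsequence. Continuity of $W\hookrightarrow V$ gives $u_n\rightharpoonup u$ in $V$, and as $D$ is weakly closed in $V$ we get $u\in D\cap W$. By (F4) the $\eta_n$ are bounded in the reflexive space $L=L^{q'}$, so after passing to a subsequence $\eta_n\rightharpoonup\eta$ in $L$; by compactness of $i_q$ we may also assume $u_n\to u$ in $L^q$ and pointwise a.e. Proposition \ref{prop.F.closed} then yields $\eta\in F(u,v)$, while $\int_\Omega\eta_n(w-u_n)\to\int_\Omega\eta(w-u)$ for every $w$ by the weak--strong pairing. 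To pass to the limit in the inequality I would first test with $w=u$, giving $\langle Eu_n,u_n-u\rangle\le\int_\Omega\eta_n(u-u_n)+K(u,v)-K(u_n,v)$; since the integral tends to $0$ and $\liminf_n K(u_n,v)\ge K(u,v)$ by the weak lower semicontinuity from (K1), this forces $\limsup_n\langle Eu_n,u_n-u\rangle\le 0$. Pseudomonotonicity of $E$ then provides $\langle Eu,w-u\rangle\ge\limsup_n\langle Eu_n,w-u_n\rangle$ for all $w$. Rewriting the inequality for $u_n$ as a lower bound for $\langle Eu_n,w-u_n\rangle$, taking $\limsup_n$ (the integral converges, $K(w,v)$ is constant, and $\limsup_n K(u_n,v)\ge K(u,v)$), and combining with pseudomonotonicity yields exactly $\langle Eu,w-u\rangle+\int_\Omega\eta(w-u)+K(w,v)-K(u,v)\ge 0$ for all $w$; testing with any $w\in\mathcal D(K_v)$ shows $K(u,v)<\infty$, so $u\in S(v)$.

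The boundedness estimate and the weak--strong convergence of the integral term are routine. The main obstacle is the limit passage in the variational inequality: one must orchestrate pseudomonotonicity of $E$, the one-sided (lower) semicontinuity of $K_v$, and the closure property of $f$ so that every inequality points in the right direction. The decisive move is establishing $\limsup_n\langle Eu_n,u_n-u\rangle\le 0$ through the choice $w=u$ \emph{before} invoking pseudomonotonicity, which is what makes the otherwise one-sided $K$-term and the only weakly convergent $\eta_n$ cooperate.
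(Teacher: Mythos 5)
Your route is the same as the paper's: boundedness from (E2)--(E3), (F4), (K3) and a fixed test function in $\mathcal D(K_v)$, then weak sequential closedness via the $w=u$ test followed by pseudomonotonicity of $E$. However, there is a genuine logical gap in the closedness part. When you test the inequality for $u_n$ with $w=u$ and conclude $\limsup_n\langle Eu_n,u_n-u\rangle\le 0$, you are silently assuming $K(u,v)<\infty$: the inequality reads $\langle Eu_n,u_n-u\rangle\le\int_\Omega\eta_n(u-u_n)+K_v(u)-K_v(u_n)$, and if $K_v(u)=+\infty$ (which (K1) permits, since $K_v$ is extended-real-valued and $\mathcal D(K_v)$ need not be weakly closed), the right-hand side is $+\infty$ and the step yields nothing; the arithmetic ``$K_v(u)-\liminf_n K_v(u_n)\le 0$'' becomes $\infty-\infty$. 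Your only justification that $K(u,v)<\infty$ appears at the very end, deduced from the limit inequality --- but that inequality was obtained via pseudomonotonicity, whose hypothesis is exactly the $\limsup\le 0$ produced by the step in question. As written, the argument is circular.

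The repair is short, and it is precisely what the paper does before passing to the limit: establish $u\in\mathcal D(K_v)$ first. Testing the inequality for $u_n$ with a fixed $w_0\in\mathcal D(K_v)$ gives $K_v(u_n)\le\langle Eu_n,w_0-u_n\rangle+\int_\Omega\eta_n(w_0-u_n)+K_v(w_0)$, whose right-hand side is bounded uniformly in $n$ (boundedness of $(u_n)$ in $W$, boundedness of the operator $E$, and the $L$-bound on $(\eta_n)$ from (F4)); hence $K_v(u)\le\liminf_n K_v(u_n)<\infty$ by the weak sequential lower semicontinuity coming from (K1). With $u\in\mathcal D(K_v)$ secured, your $w=u$ test, the pseudomonotonicity step, and the chained limsup inequality all go through exactly as you wrote them. (For fairness: the paper's own one-line justification of $u\in\mathcal D(K_v)$ --- weak lower semicontinuity plus $(u_n)\subset\mathcal D(K_v)$ --- is itself incomplete without this uniform bound, but the paper at least places the claim before it is used, which is what your write-up lacks.)
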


\begin{proof}
Let $v \in D$ be fixed. We only have to prove that $S(v)$ is bounded and weakly sequentially closed. So, let $u\in S(v)$ be arbitrary and fix some $w \in \mathcal D(K_v)$ (not depending on $u$). Then it holds, for some $\eta \in F(u,v)$,
\begin{equation}
\label{eq.estimate.solution}
\langle Eu,w\rangle + \int_\Omega \eta(w-u) + K_v(w) - K_v(u) \geq \langle Eu,u\rangle.
\end{equation}
By (K3) we have $-K_v(u) \leq c_3(\|\nabla u\|_p^{p-1} + 1)$ and we can assume that also $K_v(w) \leq c_3$. Combining these estimates with those from (A2)---(A3) and using H\"older's inequality, we obtain from \eqref{eq.estimate.solution}
\begin{equation}
\label{eq.estimate.solution.2}
\begin{split}
(\alpha_3 \norm{\nabla u}_{p}^{p-1} + \norm{a_3}_{p'})\norm{\nabla w}_{p} 
~+~ \norm{\eta}_{{q'}}(\norm{w}_{{q}} + \norm{u}_{{q}}) 
~&+~ c_3(\|\nabla u\|_p^{p-1} + 2 )\\
&\geq \alpha_2 \norm{\nabla u}_p^p - \norm{e_2}_1.
\end{split}
\end{equation}
Now, $\|\nabla w\|_{p}$ and $\|w\|_q$ are constant, $\|\eta\|_{q'}$ is bounded due to (F4), and we have $\norm{u}_q\leq \norm{\underline u}_q + \norm{\ol u}_{{q}}$, since $u\in D$. Thus, \eqref{eq.estimate.solution.2} can not hold if $\norm{\nabla u}_p$ is arbitrarily large, whence $S(v)$ is bounded in $W$.

It remains to prove that $S(v)$ is weakly sequentially closed. To this end, let $(u_n) \subset S(v)$ and assume $u_n \rightharpoonup u$ for some $u \in W$. From the continuous embedding $W \hookrightarrow V$ we obtain $u_n \rightharpoonup u$ in $V$ (and thus $u\in D$) and from the compact embedding $W \hookrightarrow L^\ast$ we obtain $u_n \to u$ in $L^\ast$. Further, since $K_v$ is weakly sequentially lower semicontinuous due to (K1), from $(u_n)\subset \mathcal D(K_v)$ we have $u\in \mathcal D(K_v)$.
Now, let $(\eta_n) \subset L$ be a sequence such that $\eta_n \in F(u_n,v)$ and
\begin{equation}
\label{eq.weaklysolution.1}
\langle Au_n,w-u_n\rangle + \int_\Omega \eta_n (w-u_n) + K_v(w) - K_v(u_n) \geq 0 \quad\text{for all }w\in W.
\end{equation}
We can assume $\eta_n \rightharpoonup \eta$ in $L$ for some $\eta \in V$ since $(\eta_n)$ is bounded in $L$ due to (F4), and thanks to Proposition \ref{prop.F.closed} we have $\eta\in F(u,v)$.
Letting $w=u$ in \eqref{eq.weaklysolution.1} and passing to the limit yields, using again that $K_v$ is weakly sequentially lower semicontinuous, 
\begin{equation*}
{\limsup}_n \langle Au_n,u_n-u\rangle \leq 0.
\end{equation*}
Since $A$ is pseudomonotone, we infer $\langle Au,w-u\rangle \geq \limsup_n \langle Au_n,w-u_n\rangle$ for all $w\in W$. Using again (\ref{eq.weaklysolution.1}), it follows, for all $w\in W$,
\begin{equation*}
\begin{split}
\langle Au,w-u\rangle &+ \int_\Omega\eta (w-u) + K_v(w) - K_v(u) \\
&\geq {\limsup}_n \langle Au_n, w-u_n\rangle + {\lim}_n \int_\Omega \eta_n (w-u_n) + K_v(w) - {\liminf}_n K_v(u_n) \\
&\geq {\limsup}_n\left(\langle Au_n, w-u_n\rangle + \int_\Omega \eta_n (w-u_n) + K_v(w) - K_v(u_n) \right) \geq 0,
\end{split}
\end{equation*}
which proves $u\in S(v)$, thus $S(v)$ is weakly sequentially closed.
\end{proof}

Next, let us check that $\ul S$ is permanent upward. This is almost a purely order-theoretical proof, but note that it is vital for the values of $F$ to be non-empty.

\begin{proposition}
The operator $\ul S\colon D \to \mathcal P(D\cap W)$ is permanent upward.
\end{proposition}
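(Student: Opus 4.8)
The plan is to obtain this as an application of Lemma~\ref{lem.dependence} in the diagonal case where the primed problem coincides with the unprimed one, both being the subsolution problem defining $\ul S$ (so that $A = A' = \ul A$, $K = K' \equiv W$, and $T = T' = \ul T$). Fixing $v \leq v'$ in $D$ and $u \in \ul S(v)$, it then suffices to verify the four relations
\[\ul A(u,v) \preccurlyeq^\ast \ul A(u,v'), \qquad W \subset W, \qquad \ul T(u,v') \subset \ul T(u,v), \qquad \ul T(u,v') \leq^\ast u,\]
after which $\ul S(v) \subset \ul S(v')$ is immediate, and since $v \leq v'$ was arbitrary, $\ul S$ is permanent upward. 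Two of these relations are trivial: the inclusion of admissibility sets holds since both equal $W$, and $\ul T(u,v') \leq^\ast u$ holds because every element of $u \wedge \mathcal D(K_{v'})$ has the form $u \wedge w \leq u$.

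For the relation on $\ul A$, I would use that $\preccurlyeq$ is additive (if $a_1 \preccurlyeq b_1$ and $a_2 \preccurlyeq b_2$ then $a_1+a_2 \preccurlyeq b_1+b_2$, a one-line consequence of the defining inequality with no $\infty-\infty$ ambiguity since values lie in $\mathbb R\cup\{\infty\}$) and treat the three summands of $\ul A(u,v) = E(u) + i_q^\ast F(u,v) + K(\cdot,v)^\downarrow$ separately. Given $a = Eu + i_q^\ast\eta + k$ with $\eta\in F(u,v)$ and $k \preccurlyeq K_v$, I would build $b = Eu + i_q^\ast\eta' + K_{v'} \in \ul A(u,v')$. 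The operator part uses $Eu \preccurlyeq Eu$, which holds since the zero functional is positive (Proposition~\ref{prop.preccurly.positive}). For the lower-order part I would construct a measurable selection $\eta'\in F(u,v')$ with $\eta'\leq \eta$ a.e.: since $v\leq v'$ and $f$ is decreasing in its third argument by (F3), the value $\eta(x)$ dominates some point of $f(x,u(x),v'(x))$ for a.e.\ $x$, so $\eta' := \eta \wedge \max f(\cdot,u,v')$ is a selection lying in $L$ by (F4) and in the non-empty set $F(u,v')$; then $i_q^\ast\eta - i_q^\ast\eta' = i_q^\ast(\eta-\eta')$ is positive and Proposition~\ref{prop.preccurly.positive} gives $i_q^\ast\eta \preccurlyeq i_q^\ast\eta'$. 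Finally, taking $v=v'$ in (K2) shows $K_{v'}\preccurlyeq K_{v'}$, so $K_{v'}\in K(\cdot,v')^\downarrow$, and from $k\preccurlyeq K_v \preccurlyeq K_{v'}$ with $K_v$ submodular (again (K2) with $v=v'$) and $W$ distributive, Proposition~\ref{prop.transitive}(ii) yields $k \preccurlyeq K_{v'}$. Adding the three sub-relations gives $a\preccurlyeq b$, hence $\ul A(u,v)\preccurlyeq^\ast \ul A(u,v')$.

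For the inclusion $\ul T(u,v') \subset \ul T(u,v)$ I would exploit the strong set order on effective domains. From $K_v \preccurlyeq K_{v'}$ we get $\mathcal D(K_v) \preccurlyeq \mathcal D(K_{v'})$ by Proposition~\ref{prop.transitive}(i), and from $k\preccurlyeq K_v$ we get $\mathcal D(k)\preccurlyeq \mathcal D(K_v)$ with $u\in\mathcal D(k)$ (note $u\in\mathcal D(a)$ and the other summands of $a$ are finite at $u$). Given $w\in\mathcal D(K_{v'})$, fix any $y\in\mathcal D(K_v)$ (non-empty by (K1)); then $u\vee y\in\mathcal D(K_v)$ by $\mathcal D(k)\preccurlyeq\mathcal D(K_v)$, and $w' := (u\vee y)\wedge w \in \mathcal D(K_v)$ by $\mathcal D(K_v)\preccurlyeq\mathcal D(K_{v'})$. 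The absorption law gives $u\wedge w' = u\wedge(u\vee y)\wedge w = u\wedge w$, so $u\wedge w \in u\wedge\mathcal D(K_v) = \ul T(u,v)$, proving the inclusion.

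I expect the main obstacle to be the construction of the dominated selection $\eta'$: this is the only genuinely analytic step, and it is precisely here that the non-emptiness of $F(u,v')$ on $[\ul u,\ol u]$ is indispensable, as flagged in the remark preceding the statement. Everything else is order-theoretic bookkeeping resting on the additivity of $\preccurlyeq$, the modified transitivity of Proposition~\ref{prop.transitive}, and the absorption identities in the lattice $W$.
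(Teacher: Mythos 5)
Your overall route is the paper's own: apply Lemma~\ref{lem.dependence} in the diagonal case with the subsolution data, split $\ul A(u,v) = E(u) + i_q^\ast F(u,v) + K_v^\downarrow$ into its three summands via additivity of $\preccurlyeq$, settle the $K$-summand by $k \preccurlyeq K_v \preccurlyeq K_{v'}$, $K_v \preccurlyeq K_v$ (from (K2) with equal arguments) and Proposition~\ref{prop.transitive}(ii), and prove $\ul T(u,v') \subset \ul T(u,v)$ from $\mathcal D(k) \preccurlyeq \mathcal D(K_v) \preccurlyeq \mathcal D(K_{v'})$ together with the absorption identity $u\wedge((u\vee y)\wedge w) = u\wedge w$. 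All of these steps are correct and coincide with the paper's argument, as do the two trivial verifications for $\ul K \equiv W$ and $\ul T(u,v')\leq^\ast u$.

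The gap is in your treatment of the multivalued term. You set $\eta' := \eta \wedge \max f(\cdot,u,v')$, but under the standing assumptions the function $x \mapsto \max f(x,u(x),v'(x))$ need not be measurable, so your $\eta'$ need not belong to $L$, hence not to $F(u,v')$. Assumptions (F1)--(F2) only guarantee that $f(\cdot,u,v')$ is \emph{weakly} superpositionally measurable, i.e. admits at least one measurable selection; since $f$ is merely upper semicontinuous (not continuous) in its second argument, measurability of the composed multifunction $x\mapsto f(x,u(x),v'(x))$ itself --- which is what one would need to extract its extremal (maximal) selection measurably --- is deliberately \emph{not} assumed, as the paper stresses in the remark comparing its hypotheses with those of \cite{Le15}. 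The paper's proof avoids exactly this pitfall: it takes an \emph{arbitrary} measurable selection $\eta' \in F(u,v')$ (available by non-emptiness), forms $\eta'' := \eta \wedge \eta'$, which is measurable as a minimum of two measurable functions, and uses the possibly non-measurable pointwise minorant $\alpha \subset f(\cdot,u,v')$ with $\alpha \leq \eta$ (your (F3) argument) only as a bound: from $\alpha\wedge\eta' \leq \eta'' \leq \eta'$ and the fact that the values of $f$ are closed intervals, it follows pointwise that $\eta'' \subset f(\cdot,u,v')$, so $\eta''\in F(u,v')$, and $\eta''\leq\eta$ gives $i_q^\ast\eta \preccurlyeq i_q^\ast\eta''$ by Proposition~\ref{prop.preccurly.positive} exactly as in your argument. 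With this replacement your proof goes through unchanged.
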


\begin{proof}
We apply Lemma \ref{lem.dependence}. So, let $v\leq v'$ in $D$ and $u\in S(v)$.

\textit{First}, we have to prove $\ul A(u,v) \preccurlyeq^\ast \ul A(u,v')$, where $\ul A(u,v) = E(u) + i_q^\ast F(u,v) + K_v^\downarrow$. We consider the summands separately: 
\begin{itemize}
    \item[(i)] Since $E(u)$ does not depend on $v$, there is nothing to do. 
    \item[(ii)] Let $\eta \in F(u,v)$, let $\eta' \in F(u,v')$ be arbitrary and set $\eta'' := \eta \wedge \eta' \in L$. Further, let $\alpha \colon \Omega \to \mathbb R$ be such that $\alpha \subset f(\cdot,u,v')$ and $\alpha \leq \eta$ (which exists due to (F3)). $\alpha$ may not be measurable, but we have $\alpha \wedge \eta' \leq \eta'' \leq \eta'$. Now, recall that all values of $f$ are closed. It thus follows $\alpha\wedge \eta' \subset f(\cdot,u,v')$ and $\eta'' \subset f(\cdot,u,v')$, i.e. $\eta '' \in F(u,v')$. Further, from $\eta'' \leq \eta$ we obtain that $i_q^\ast(\eta - \eta'')$ is a positive linear functional such that Proposition \ref{prop.preccurly.positive} implies $i_q^\ast \eta \preccurlyeq i_q^\ast \eta''$. 
    \item[(iii)] Let $k\in K_v^\downarrow$, i.e. $k \preccurlyeq K_v$. Then from (K2) we have $K_v \preccurlyeq K_v \preccurlyeq K_{v'}$ and Proposition \ref{prop.transitive} gives $k \preccurlyeq K_{v'}$, i.e. $k \in K_{v'}^\downarrow$. That is, we even have $K_v^\downarrow \subset K_{v'}^\downarrow$.
\end{itemize}

\textit{Second}, $\ul K(u,v) \subset \ul K(u,v')$ holds trivially since $\underline K(u,v) \equiv W$.

\textit{Third}, for $u\in \ul S(v)$ we have to prove that $\ul T(u,v') \subset \ul T(u,v)$, where $\ul T(u,v) = u\wedge \mathcal D(K_v)$. To this end, let $k \in K_v^\downarrow$ be such that $u \in \mathcal D(k)$, let $w' \in \mathcal D(K_{v'})$ and let $w \in \mathcal D(K_v)$ be arbitrary. Then $u\wedge w' = u\wedge((u\vee w)\wedge w') \in u \wedge \mathcal D(K_v)$ since $\mathcal D(k) \preccurlyeq \mathcal D(K_v) \preccurlyeq \mathcal D(K_{v'})$, see Proposition \ref{prop.transitive}. 

\textit{Fourth}, $\ul T(u,v') \leq^\ast u$ holds true by definition of $\wedge$.
\end{proof}

It now remains to prove that all values of $\ul S$ are directed upward, and that it holds $S(v) \subset \ul S(v) \leq^\ast S(v)$. Of these properties, the inclusion $S(v) \subset \ul S(v)$ is easy to show due to the variational definition of $\ul S$:

\begin{proposition}
For all $v\in D$ it holds $S(v) \subset \ul S(v)$.
\end{proposition}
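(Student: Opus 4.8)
The plan is to show that the very functional witnessing membership in $S(v)$ is already admissible for the subsolution problem, after which the fact that $\ul S$ tests against fewer functions makes the conclusion immediate. So let $u\in S(v)$. By Definition \ref{def.solution_operator} there is a selection $\eta\in F(u,v)$ such that the functional $a:=E(u)+i_q^\ast\eta+K_v$ lies in $A(u,v)$ and satisfies $a(w)-a(u)\ge 0$ for all $w\in W$; moreover the solution property implicitly gives $u\in\mathcal D(a)$, and since the first two summands are real-valued we have $\mathcal D(a)=\mathcal D(K_v)$, so $u\in\mathcal D(K_v)$.

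The one genuine observation is that $a$ already belongs to $\ul A(u,v)=E(u)+i_q^\ast F(u,v)+K_v^\downarrow$. Indeed, $\ul A(u,v)$ and $A(u,v)$ share the summands $E(u)$ and $i_q^\ast F(u,v)$, so it suffices to check that $K_v\in K_v^\downarrow$, i.e. $K_v\preccurlyeq K_v$. This is exactly assumption (K2) applied to the admissible comparison $v\le v$: since $v\in D=[\ul u,\ol u]$ and $\le$ is reflexive, (K2) yields $K_v\preccurlyeq K_v$, that is, $K_v$ is submodular. Hence $a\in\ul A(u,v)$ with the same $\eta$ and the choice $k=K_v\in K_v^\downarrow$.

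It then remains only to restrict the set of test functions. Because $W$ is a lattice and $u\in W$, every $w\in\ul T(u,v)=u\wedge\mathcal D(K_v)$ satisfies $w\in W$, so the inequality $a(w)-a(u)\ge 0$, which holds for all $w\in W$, holds in particular for all $w\in\ul T(u,v)$. Together with $u\in\ul K(u,v)=W$ (which is trivial), this shows that $u$ solves the modified problem associated with $\ul A$, $\ul K$ and $\ul T$, i.e. $u\in\ul S(v)$.

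I expect no real obstacle here. The only point requiring care is recognising that, although the relation $\preccurlyeq$ is not reflexive in general, assumption (K2) forces $K_v\preccurlyeq K_v$ by taking $v'=v$; this is precisely the reason $\ul A$ was built from $K_v^\downarrow$ rather than $\{K_v\}$, and it is what makes $A(u,v)\subseteq\ul A(u,v)$ hold. Everything else is the bookkeeping that $\ul A$ and $A$ agree in their $E$- and $F$-parts and that passing from $W$ to $\ul T(u,v)\subseteq W$ only shrinks the collection of test functions.
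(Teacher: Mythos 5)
Your proof is correct, but it takes a more direct route than the paper. The paper proves this inclusion by invoking Lemma \ref{lem.dependence} with $v'=v$, $A'=\ul A$, $K'=\ul K\equiv W$ and $T'=\ul T$: one checks $A(u,v)\preccurlyeq^\ast \ul A(u,v)$ (via $K_v\preccurlyeq^\ast K_v^\downarrow$), $\ul T(u,v)\subset W$ and $\ul T(u,v)\leq^\ast u$, and the lemma then exchanges the functional $a$ for a dominating $b\in\ul A(u,v)$ and re-derives the variational inequality through the lattice identities $u\wedge w=w$, $u\vee w=u$. You instead observe the stronger fact that $A(u,v)\subset\ul A(u,v)$ as sets --- which, as you correctly isolate, reduces to $K_v\in K_v^\downarrow$, i.e. $K_v\preccurlyeq K_v$, supplied by (K2) applied with $v'=v$ --- so that the very same functional $a$ witnesses membership in $\ul S(v)$ once the test set shrinks from $W$ to $\ul T(u,v)\subset W$. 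This bypasses the lemma entirely: you need neither its hypothesis $\ul T(u,v)\leq^\ast u$ nor any manipulation of $\preccurlyeq$ beyond reflexivity of $K_v$, and you make explicit a point the paper leaves implicit, namely that the claim $K_v\preccurlyeq^\ast K_v^\downarrow$ itself requires the (K2)-witness $K_v\preccurlyeq K_v$. What the paper's route buys is uniformity: the same Lemma \ref{lem.dependence} also drives the proof that $\ul S$ is permanent upward, and it covers the general situation where only $\preccurlyeq^\ast$-domination, not set inclusion, is available and the functional genuinely has to be exchanged.
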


\begin{proof}
We once again apply Lemma \ref{lem.dependence}. To this end, let $v\in D$ and $u\in S(v)$. Then from $K_v \preccurlyeq^\ast K_v^\downarrow$ it follows $A(u,v) \preccurlyeq^\ast \ul A(u,v)$. Further, $\ul T(u,v) \subset W$ as well as $\ul T(u,v) \leq^\ast u$ hold trivially.
\end{proof}

The remaining properties rely on sophisticated truncation arguments which allow us to apply the following existence theorem for perturbed coercive pseudomonotone multifunctions:

\begin{theorem}
\label{theorem.surjective.psm}
Let $X$ be a real reflexive Banach space, let $M\colon X\to \mathcal P(X)$ be a maximal monotone mapping, suppose $M(u_0)\neq\emptyset$, and let $T\colon X\to \mathcal P(X^\ast)$ be a bounded, pseudomonotone mapping having only non-empty, closed and convex values. If $T$ is \textbf{coercive} with respect to $u_0$, i.e. there exists a real-valued function $c$ on $\mathbb R^+$ with $\lim_{s\to\infty} c(s) = \infty$ such that for all $u\in X$, $u^\ast\in T(u)$ one has $\langle u^\ast,u-u_0\rangle \geq c(\norm{u})\norm{u}$,
then $M+T$ is {surjective}, i.e. for all $y\in X^\ast$ there is $x\in X$ such that $y\in (M+T)(x)$.
\end{theorem}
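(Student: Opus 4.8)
The plan is to read this as a surjectivity theorem of Browder--Hess type and to prove it by Yosida regularization of the maximal monotone part, reducing the problem to the surjectivity theorem for bounded, coercive, set-valued pseudomonotone operators. First I would reduce to $y=0$: replacing $T$ by the translate $u\mapsto\{u^\ast-y:u^\ast\in T(u)\}$ preserves boundedness, pseudomonotonicity, and the non-emptiness, closedness and convexity of the values, and since $\langle u^\ast-y,u-u_0\rangle\ge c(\norm u)\norm u-\norm y(\norm u+\norm{u_0})$ with $c(s)\to\infty$, coercivity with respect to $u_0$ also survives; it therefore suffices to find $x$ with $0\in(M+T)(x)$. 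After renorming $X$ so that the duality map $J\colon X\to X^\ast$ is single-valued and strictly monotone (possible since $X$ is reflexive), I introduce for $\lambda>0$ the resolvent $J_\lambda$ and the Yosida approximation $M_\lambda$, determined by $M_\lambda x=\tfrac1\lambda J(x-J_\lambda x)\in M(J_\lambda x)$. Then $M_\lambda\colon X\to X^\ast$ is single-valued, bounded, demicontinuous and monotone, hence pseudomonotone, and it satisfies $\langle M_\lambda x,x-J_\lambda x\rangle=\tfrac1\lambda\norm{x-J_\lambda x}^2\ge0$ together with the comparison $\langle M_\lambda x-w_0,J_\lambda x-u_0\rangle\ge0$ for a fixed $w_0\in M(u_0)$.

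Next, $M_\lambda+T$ is bounded and pseudomonotone (a sum of a bounded demicontinuous monotone operator and a bounded pseudomonotone one), has non-empty closed convex values, and is coercive with respect to $u_0$, the coercivity of $T$ dominating the lower bound $\langle M_\lambda x,x-u_0\rangle\ge\langle w_0,J_\lambda x-u_0\rangle$ that monotonicity supplies. The surjectivity theorem for bounded coercive set-valued pseudomonotone operators then provides, for each $\lambda>0$, some $x_\lambda\in X$ and $\eta_\lambda\in T(x_\lambda)$ with $M_\lambda x_\lambda+\eta_\lambda=0$. Inserting this identity into the coercivity estimate and the comparison above yields a bound on $\norm{x_\lambda}$ uniform in $\lambda$; boundedness of $T$ then bounds $\eta_\lambda$ and hence $M_\lambda x_\lambda=-\eta_\lambda$. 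Choosing $\lambda_n\to0$ and using reflexivity I extract (along a subsequence) $x_{\lambda_n}\rightharpoonup x$, $\eta_{\lambda_n}\rightharpoonup\eta$ and $M_{\lambda_n}x_{\lambda_n}\rightharpoonup-\eta$, and from the boundedness of $\tfrac1{\lambda_n}\norm{x_{\lambda_n}-J_{\lambda_n}x_{\lambda_n}}^2=\langle M_{\lambda_n}x_{\lambda_n},x_{\lambda_n}-J_{\lambda_n}x_{\lambda_n}\rangle$ I also get $\norm{x_{\lambda_n}-J_{\lambda_n}x_{\lambda_n}}\to0$, so that $J_{\lambda_n}x_{\lambda_n}\rightharpoonup x$ as well.

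It remains to identify the limits, that is, to show $\eta\in T(x)$ and $-\eta\in M(x)$, which together give $0\in(M+T)(x)$ and thus $y\in(M+T)(x)$ for the original datum. This passage to the limit is the main obstacle, because the two inclusions are coupled. From $M_{\lambda_n}x_{\lambda_n}+\eta_{\lambda_n}=0$ one has $\langle M_{\lambda_n}x_{\lambda_n},x_{\lambda_n}-x\rangle=-\langle\eta_{\lambda_n},x_{\lambda_n}-x\rangle$, and since $\norm{x_{\lambda_n}-J_{\lambda_n}x_{\lambda_n}}\to0$ with $M_{\lambda_n}x_{\lambda_n}$ bounded, $\langle M_{\lambda_n}x_{\lambda_n},J_{\lambda_n}x_{\lambda_n}-x\rangle=-\langle\eta_{\lambda_n},x_{\lambda_n}-x\rangle-\tfrac1{\lambda_n}\norm{x_{\lambda_n}-J_{\lambda_n}x_{\lambda_n}}^2+o(1)$. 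Identifying $\eta\in T(x)$ via pseudomonotonicity requires $\limsup_n\langle\eta_{\lambda_n},x_{\lambda_n}-x\rangle\le0$, while identifying $-\eta\in M(x)$ via the generalized pseudomonotonicity (convergence lemma) of the maximal monotone $M$ requires $\limsup_n\langle M_{\lambda_n}x_{\lambda_n},J_{\lambda_n}x_{\lambda_n}-x\rangle\le0$, and the displayed relation links the two. The decisive step is therefore to secure $\limsup_n\langle\eta_{\lambda_n},x_{\lambda_n}-x\rangle\le0$, which I would attempt using only the comparison with $(u_0,w_0)$ and the non-negativity of $\tfrac1{\lambda_n}\norm{x_{\lambda_n}-J_{\lambda_n}x_{\lambda_n}}^2$; once it holds, pseudomonotonicity of $T$ upgrades it to $\langle\eta_{\lambda_n},x_{\lambda_n}-x\rangle\to0$ and $\eta\in T(x)$, the non-negative term then forces $\limsup_n\langle M_{\lambda_n}x_{\lambda_n},J_{\lambda_n}x_{\lambda_n}-x\rangle\le0$, and testing $\langle M_{\lambda_n}x_{\lambda_n}-v^\ast,J_{\lambda_n}x_{\lambda_n}-v\rangle\ge0$ against arbitrary $(v,v^\ast)\in\gr M$ and invoking maximality delivers $-\eta\in M(x)$. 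Disentangling these coupled contributions to obtain that first $\limsup$ estimate is the genuinely delicate point on which the whole argument rests.
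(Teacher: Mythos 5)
There is a genuine gap, and it sits exactly where you yourself place it. (For context: the paper does not prove this theorem at all --- it is quoted with a reference to \cite{NaPa95} --- so your attempt can only be judged on its own merits.) Your overall scheme (reduce to $y=0$, renorm and introduce the Yosida approximation $M_\lambda$, solve $M_\lambda x_\lambda + \eta_\lambda = 0$ by the surjectivity theorem for bounded coercive pseudomonotone multis, derive uniform bounds, pass to weak limits) is a legitimate and standard route of Browder--Hess type, and the a priori estimates you claim do work out. But the decisive step, $\limsup_n \langle \eta_{\lambda_n}, x_{\lambda_n}-x\rangle \le 0$, is left as an intention, and the tool you propose for it --- the comparison with the single pair $(u_0,w_0)$ together with nonnegativity of $\tfrac1{\lambda_n}\norm{x_{\lambda_n}-J_{\lambda_n}x_{\lambda_n}}^2$ --- provably cannot deliver it. Monotonicity tested against $(u_0,w_0)$ alone only yields $\liminf_n \langle M_{\lambda_n}x_{\lambda_n}, J_{\lambda_n}x_{\lambda_n}-x\rangle \ge \langle \eta + w_0,\, x-u_0\rangle$, and the sign of the right-hand side is completely uncontrolled; so the coupled system of inequalities you describe cannot be disentangled by this means.

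What closes the argument is the standard maximality dichotomy, which uses the \emph{whole} graph of $M$, not one element of it. Since $M_{\lambda_n}x_{\lambda_n} \in M(J_{\lambda_n}x_{\lambda_n})$, monotonicity against an arbitrary $(v,v^\ast)\in \gr M$ gives, in the limit, $\liminf_n \langle M_{\lambda_n}x_{\lambda_n}, J_{\lambda_n}x_{\lambda_n}-x\rangle \ge \langle v^\ast+\eta,\, x-v\rangle$. Now either there exists $(v,v^\ast)\in\gr M$ with $\langle v^\ast+\eta, x-v\rangle \ge 0$ --- then your $\limsup$ estimate holds and the rest of your scheme (generalized pseudomonotonicity of $T$ giving $\eta\in T(x)$, then the vanishing of both nonnegative contributions, then the convergence lemma for $M$ giving $-\eta\in M(x)$) runs as written --- or else $\langle -\eta - v^\ast,\, x-v\rangle > 0$ for \emph{every} $(v,v^\ast)\in\gr M$; in that case $(x,-\eta)$ is monotonically related to all of $\gr M$, so maximality forces $-\eta\in M(x)$ directly, and taking $(v,v^\ast)=(x,-\eta)$ contradicts the strict inequality, so this case is vacuous (or, read positively, yields the conclusion immediately). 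Without this dichotomy, or an equivalent device, your proof is incomplete at the single point on which, as you say, ``the whole argument rests.'' Two smaller debts you should also acknowledge with citations rather than assertions: the single-valuedness, demicontinuity and boundedness of $M_\lambda$ for a maximal monotone $M\colon X\to\mathcal P(X^\ast)$ requires the Troyanski renorming making $J$ a homeomorphism, and the statement that bounded pseudomonotone multivalued maps are generalized pseudomonotone (used to identify $\eta\in T(x)$) is itself a nontrivial Browder--Hess result.
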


The proof of Theorem \ref{theorem.surjective.psm} together with further information is contained in, e.g., \cite{NaPa95}.
We will skip the rather technical definition of multi-valued pseudomonotonicity (but note that each single-valued pseudomonotone mapping is also pseudomonotone as a multi-valued mapping, and that sums of pseudomonotone mappings are pseudomonotone, too) and invoke \cite[Proposition 2.2]{NaPa95} to formulate the following useful result:

\begin{proposition}
\label{prop.psm}
Let $X$ be a real reflexive Banach space and let $T\colon X\to \mathcal P(X^\ast)$ be a multi-valued mapping satisfying the following properties:
\begin{itemize}
\item[(i)] All values of $T$ are non-empty, closed and convex,
\item[(ii)] $T$ is \textbf{bounded}, i.e. $T$ maps bounded sets to bounded ones,
\item[(iii)] the graph $Gr(T)$ of $T$ is \textbf{sequentially weakly closed}, i.e. for all weakly convergent sequences $u_n\rightharpoonup u$ in $X$ and $u_n^\ast\rightharpoonup u^\ast$ in $X^\ast$ with $u_n^\ast\in T(u_n)$ one has $u^\ast\in T(u)$,
\item[(iv)] the duality pairing is \textbf{($\bs{w\times w}$)-continuous} on $Gr(T)$, i.e. $u_n\rightharpoonup u$ in $X$ and $u_n^\ast\rightharpoonup u^\ast$ in $X^\ast$ with $u_n^\ast\in T(u_n)$ imply $\langle u_n^\ast,u_n\rangle \to \langle u^\ast,u\rangle$.
\end{itemize}
Then $T$ is pseudomonotone.
\end{proposition}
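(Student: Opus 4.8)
The plan is to reduce the claim to the definition of multivalued pseudomonotonicity employed in \cite{NaPa95} and to discharge its two substantive requirements by a single device: extraction of a weakly convergent subsequence of the image sequence, identification of its weak limit via the graph hypothesis, and passage to the limit in the duality pairing. Recall that a multifunction $T\colon X \to \mathcal P(X^\ast)$ with non-empty, closed, convex, bounded values is pseudomonotone provided (a) it is upper semicontinuous from every finite-dimensional subspace of $X$ into $X^\ast$ equipped with the weak topology, and (b) whenever $u_n \rightharpoonup u$ and $u_n^\ast \in T(u_n)$ satisfy $\limsup_n \langle u_n^\ast, u_n - u\rangle \leq 0$, then for each $y \in X$ there is $u^\ast(y) \in T(u)$ with $\langle u^\ast(y), u - y\rangle \leq \liminf_n \langle u_n^\ast, u_n - y\rangle$. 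Hypothesis (i) already supplies the structural requirement on the values, so only (a) and (b) remain; formally the quickest route is to note that (i)--(iv) are precisely the hypotheses of \cite[Proposition 2.2]{NaPa95}, but let me indicate the direct argument.

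First I would verify (a). On a finite-dimensional subspace $F \subset X$ the weak and norm topologies coincide, so it suffices to treat $u_n \to u$ in $F$ with $u_n^\ast \in T(u_n)$. By (ii) the sequence $(u_n^\ast)$ lies in a bounded, hence relatively weakly compact, subset of the reflexive space $X^\ast$, so by Eberlein--\v Smulian it has a weakly convergent subsequence, whose limit lies in $T(u)$ by the weakly closed graph (iii). A standard contradiction argument --- supposing that some weakly open $V \supset T(u)$ were escaped along a sequence $u_n \to u$ --- then upgrades this to genuine upper semicontinuity into the weak topology, yielding (a).

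Next I would establish (b), where (iii) and (iv) do the real work. Suppose $u_n \rightharpoonup u$ and $u_n^\ast \in T(u_n)$, and fix $y \in X$. By (ii) the sequence $(u_n^\ast)$ is bounded; set $\beta := \liminf_n \langle u_n^\ast, u_n - y\rangle$ and pass to a subsequence attaining this liminf, and then, using reflexivity and Eberlein--\v Smulian, to a further subsequence with $u_{n_k}^\ast \rightharpoonup u^\ast$. Hypothesis (iii) gives $u^\ast \in T(u)$, while hypothesis (iv) gives $\langle u_{n_k}^\ast, u_{n_k}\rangle \to \langle u^\ast, u\rangle$, so that
\[ \langle u_{n_k}^\ast, u_{n_k} - y\rangle = \langle u_{n_k}^\ast, u_{n_k}\rangle - \langle u_{n_k}^\ast, y\rangle \longrightarrow \langle u^\ast, u\rangle - \langle u^\ast, y\rangle = \langle u^\ast, u - y\rangle. \]
Hence $\beta = \langle u^\ast, u - y\rangle$, and $u^\ast(y) := u^\ast$ satisfies the inequality in (b) --- in fact with equality, and without ever invoking the premise $\limsup_n \langle u_n^\ast, u_n - u\rangle \leq 0$, so (iv) yields more than is strictly demanded.

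The main obstacle, modest as it is, lies in the two passages from a sequential hypothesis to a topological conclusion: from the sequentially weakly closed graph (iii) to honest upper semicontinuity in the weak topology in step (a), and from a subsequence realizing $\beta$ back to the full-sequence liminf in step (b). Both are resolved by the reflexivity of $X$, through which bounded subsets of $X^\ast$ are weakly compact and, by Eberlein--\v Smulian, weakly sequentially compact, so that the sequential reasoning loses nothing. With (a) and (b) in hand, the pseudomonotonicity of $T$ follows.
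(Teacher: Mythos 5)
Your proposal is correct, but note what the paper actually does: it gives \emph{no} proof of this proposition at all. It deliberately skips the technical definition of multi-valued pseudomonotonicity and states the result as a direct invocation of \cite[Proposition 2.2]{NaPa95}; the proposition is a restatement of that cited result. You acknowledge this citation shortcut yourself, but then supply the direct verification from the Browder--Hess definition, and that argument is sound: part (a) follows from boundedness, Eberlein--\v Smulian and the weakly closed graph, and your sketched contradiction argument closes correctly, since countably many neighborhoods suffice on a finite-dimensional (hence metrizable) subspace and a weakly convergent subsequence must eventually enter any weakly open $V \supset T(u)$, its limit lying in $T(u)$ by (iii); part (b) follows by extracting a subsequence realizing $\liminf_n \langle u_n^\ast, u_n - y\rangle$ (finite, by (ii)), identifying its weak limit $u^\ast \in T(u)$ by (iii), and passing to the limit in the pairing by (iv). In substance you have reproduced the proof of the cited result of \cite{NaPa95} (a bounded map with non-empty, closed, convex values whose graph behaves generalized-pseudomonotonically is pseudomonotone). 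What the citation buys the paper is brevity and consistency with its stated intention to keep the technical definition out of sight; what your argument buys is self-containedness, together with the correct and worthwhile observation that (iii)--(iv) are strictly stronger than generalized pseudomonotonicity: the premise $\limsup_n \langle u_n^\ast, u_n - u\rangle \leq 0$ is never used, and the limit inequality in the definition is obtained with equality along the extracted subsequences.
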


Thanks to Proposition \ref{prop.psm} we are able to show that upper Carath\'eodory functions induce a pseudomonotone operator, which we will need to prove the existence of solutions. The proof of the next lemma is contained in \cite{CaTi18}, thus we keep it short. Note, however, that we do not need any knowledge about Hausdorff upper semicontinuous multifunctions as in \cite[Lemma 2.5]{Le14} which allows for a rather elementary approach.

\begin{lemma}
\label{lem.G.pseudomonotone}
Let $g\colon \Omega \times \mathbb R \to \mathcal P(\mathbb R)$ be a multifunction whose values are non-empty, closed and convex. Suppose further that $g$ is \textbf{upper Carath\'eodory}, i.e. $x\mapsto g(x,s)$ is measurable for all $s\in \mathbb R$ and $s\mapsto g(x,s)$ is upper semicontinuous for a.e. $x\in\Omega$, and let there be some $b \in L$ such that $g$ satisfies, for a.e. $x\in\Omega$,
\begin{equation}
\label{eq.growth.g}
|y| \leq b(x)\quad\text{for all }y\in g(x,s)\text{, a.e }x\in \Omega\text{ and all }s\in\mathbb R.
\end{equation}
Then the selection mapping
\begin{equation*}
G\colon W \to \mathcal P(W^\ast),\quad G(u) := \{i_q^\ast \eta: \eta \textrm{ is a measurable selection of } g(\cdot,u)\}
\end{equation*}
is well-defined, pseudomonotone, bounded and has only non-empty, closed and convex values.
\end{lemma}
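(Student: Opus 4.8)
The plan is to verify the four hypotheses of Proposition \ref{prop.psm} for $G$ and to read off well-definedness, boundedness and the structure of the values along the way. The guiding observation is that $G = i_q^\ast \circ \Sigma$, where $\Sigma(u)$ denotes the set of measurable selections of $g(\cdot,u)$ regarded as a subset of $L$, and that the compactness of $i_q$ (equivalently of $i_q^\ast$) is exactly what upgrades the weak convergences appearing in the definition of pseudomonotonicity into the strong convergence one needs.

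First I would settle well-definedness and the shape of the values. For fixed $u\in W$ the superposition $x\mapsto g(x,u(x))$ is measurable (the upper Carath\'eodory analogue of the reasoning behind (F1), cf.\ \cite[Lemma 7.1]{Apea95}), so $g(\cdot,u)$ admits a measurable selection $\eta$; by the growth bound \eqref{eq.growth.g} we have $|\eta|\leq b\in L$ pointwise, whence $\eta\in L$ and $i_q^\ast\eta\in W^\ast$ is well-defined and $G(u)\neq\emptyset$. The set $\Sigma(u)$ is convex because each value $g(x,u(x))$ is an interval, and bounded in $L$ by $\|b\|_{q'}$; it is norm closed, since a norm-convergent sequence of selections has a pointwise a.e.\ convergent subsequence whose limit is again a selection (the values being closed), and hence, being convex, it is weakly closed. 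As $L=L^{q'}$ is reflexive, $\Sigma(u)$ is therefore weakly compact. Applying the bounded linear, hence weak-to-weak continuous, operator $i_q^\ast$ shows that $G(u)=i_q^\ast\Sigma(u)$ is weakly compact and convex, hence closed and convex in $W^\ast$. Boundedness of $G$ follows from the same uniform estimate: $\|\eta\|_{q'}\leq\|b\|_{q'}$ forces $\|i_q^\ast\eta\|_{W^\ast}\leq\|i_q^\ast\|\,\|b\|_{q'}$ for every $u$.

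Next I would verify conditions (iii) and (iv) of Proposition \ref{prop.psm} together. Let $u_n\rightharpoonup u$ in $W$ and $u_n^\ast\rightharpoonup u^\ast$ in $W^\ast$ with $u_n^\ast=i_q^\ast\eta_n$, $\eta_n\in\Sigma(u_n)$. Since $(\eta_n)$ is bounded in $L$ by \eqref{eq.growth.g}, after passing to a subsequence $\eta_n\rightharpoonup\eta$ in $L$, so that $u_n^\ast=i_q^\ast\eta_n\rightharpoonup i_q^\ast\eta$ and therefore $u^\ast=i_q^\ast\eta$. The compact embedding $i_q\colon W\hookrightarrow L^\ast$ gives $u_n\to u$ strongly in $L^\ast=L^q$, and a further subsequence converges pointwise a.e.; the weak closure property of $g$ (the analogue of Proposition \ref{prop.F.closed}, following from the upper Carath\'eodory hypothesis via \cite[Proposition 2]{CaTi18}) then yields $\eta\subset g(\cdot,u)$, i.e.\ $\eta\in\Sigma(u)$ and $u^\ast=i_q^\ast\eta\in G(u)$, which is (iii). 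For (iv), the same strong convergence $u_n\to u$ in $L^q$ together with $\eta_n\rightharpoonup\eta$ in $L^{q'}$ gives $\langle u_n^\ast,u_n\rangle=\int_\Omega\eta_n u_n\to\int_\Omega\eta u=\langle u^\ast,u\rangle$, splitting the difference as $\int_\Omega\eta_n(u_n-u)+\int_\Omega(\eta_n-\eta)u$ and bounding the first term by H\"older's inequality and the boundedness of $(\eta_n)$. With (i)--(iv) in hand, Proposition \ref{prop.psm} delivers the pseudomonotonicity of $G$.

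I expect the two genuinely nontrivial steps to be precisely those where measurability and semicontinuity enter: the existence of a measurable selection (for non-emptiness) and the weak closure property used in (iii). Both are where the assumption that $g$ be upper Carath\'eodory is indispensable, and both I would dispatch by citing the selection theorem and the closure result rather than re-proving them. Everything else---convexity, closedness and boundedness of the values, and the $(w\times w)$-continuity of the pairing---is routine functional analysis whose single decisive ingredient is the compactness of $i_q$, converting the weak convergence of $(u_n)$ in $W$ into strong convergence in $L^q$.
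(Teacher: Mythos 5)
Your proposal is correct and follows essentially the same route as the paper's proof: verifying hypotheses (i)--(iv) of Proposition \ref{prop.psm}, using the growth bound \eqref{eq.growth.g} for boundedness, weak compactness in the reflexive space $L$ together with the closure property of Proposition \ref{prop.F.closed} for the values and for condition (iii), and the compactness of $i_q$ plus a H\"older splitting for condition (iv). The only differences are presentational --- you factor $G = i_q^\ast\circ\Sigma$ and spell out non-emptiness via the measurable selection theorem, which the paper leaves implicit since it keeps the proof deliberately short by reference to \cite{CaTi18}.
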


\begin{proof}
Let $u\in W$ be given. Due to the growth condition \eqref{eq.growth.g} we see that the measurable selections of $g(\cdot,u)$ are uniformly bounded in $L$ independent of $u$. Since 
$i_q^\ast\colon L \hookrightarrow W^\ast$ is a compact embedding, we conclude that $G(u)$ is well-defined and uniformly bounded in $W^\ast$, again independent of $u$.
Further, $G(u)$ is convex and closed in $W^\ast$ since $g$ has convex and closed values. Indeed, let $(\eta_n)\subset g(\cdot,u)$ be some sequence such that the sequence $(i_q^\ast \eta_n)$ converges in $W^\ast$. Since $g(\cdot,u)$ is bounded in the reflexive space $L$, we conclude, up to a subsequence, $\eta_n\rightharpoonup \eta$ in $L$ for some $\eta$, and due to Proposition \ref{prop.F.closed} we have $\eta\subset g(\cdot,u)$. Because $i_q^\ast$ is linear and bounded, it follows $i_q^\ast \eta_n \rightharpoonup i_q^\ast \eta$ in $W^\ast$ and we infer that $G(u)$ is closed.

In order to apply Proposition \ref{prop.psm}, it remains to verify that $Gr(G)$ is sequentially weakly closed and that the duality pairing is (w$\times$w)-continuous on $Gr(G)$. 
To this end, let $(u_n)\subset W$ and $(i^\ast \eta_n)\subset W^\ast$ be sequences such that $\eta_n\subset g(\cdot,u_n)$ for all $n$, and, for some $u\in W$, $\eta^\ast\in W^\ast$, $u_n\rightharpoonup u$ in $W$ and $i_q^\ast\eta_n\rightharpoonup \eta^\ast$ in $W^\ast$. By the compact embedding $i_q$ and since $(\eta_n)$ is bounded in $L$, we have, up to a subsequence, $u_n\to u$ in $L^\ast$, $u_n\to u$ a.e. and $\eta_n\rightharpoonup \eta$ in $L$ for some $\eta\in L$. We apply once more Proposition \ref{prop.F.closed} to conclude $\eta\subset g(\cdot,u)$. Since $i_q^\ast$ is linear and bounded, we infer $i_q^\ast\eta_n\rightharpoonup i_q^\ast \eta$, and thus $\eta^\ast = i_q^\ast\eta\in G(u)$. Further, we apply the identity $\langle i_q^\ast \theta,v\rangle = \langle \theta,v\rangle$ (for $\theta \in L$, $v\in W$) and the triangle inequality to conclude
\begin{equation*}
0\leq |\langle \eta_n^\ast,u_n\rangle - \langle \eta^\ast,u\rangle| 
\leq \norm{\eta_n}_{L}\norm{u_n-u}_{L^\ast} + |\langle \eta_n-\eta,u\rangle|\to 0.
\end{equation*}
Thus, $G$ fulfills the requirements of Proposition \ref{prop.psm} and is thus pseudomonotone.
\end{proof}

Note that \eqref{eq.growth.g} is a global growth condition while (F4) is only a local growth condition. This is why we need some truncation procedure in the proof of the next theorem.

\begin{theorem}
\label{theorem.aux.existence}
Let $v\in D$ 
be arbitrary, and let $\ul v_i \in \ul S(v)$ and $\ol v_i\in \ol S(v)$, $i=1,2$, be 
such that 
\begin{equation*}
\ul u \leq \ul v\defeq \ul v_1 \vee \ul v_2 \leq \ol v_1 \wedge \ol v_2 \eqdef \ol v \leq \ol u.
\end{equation*}
Then there is 
$u\in S(v)$ such that $\ul v\leq u \leq \ol v$. Especially, $S(v)$ is directed upward and it holds $\ul S(v) \leq^\ast S(v)$.
\end{theorem}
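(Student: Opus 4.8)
The plan is to reduce both ``especially'' assertions to the single existence claim and then produce one $u\in S(v)$ with $\ul v\le u\le\ol v$. For directedness, take $u_1,u_2\in S(v)$; since $S(v)\subset\ul S(v)$ and, dually, $S(v)\subset\ol S(v)$, while $\ol u\in\fix\ol S$ and $\ol S$ is permanent downward with $v\le\ol u$ (so $\ol u\in\ol S(\ol u)\subset\ol S(v)$), I apply the main claim to $\ul v_i:=u_i$ and $\ol v_i:=\ol u$ and get $u\in S(v)$ with $u_1\vee u_2\le u$. For $\ul S(v)\le^\ast S(v)$, take $\ul w\in\ul S(v)$ and apply the claim to $\ul v_1=\ul v_2:=\ul w$, $\ol v_1=\ol v_2:=\ol u$, obtaining $u\in S(v)$ with $\ul w\le u$. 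It thus suffices to solve a globally bounded truncated inclusion and show its solution already lies in $[\ul v,\ol v]$, so that the truncation is inactive.

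First I would promote $\ul v_1,\ul v_2$ to a single subsolution $\ul v=\ul v_1\vee\ul v_2$ and $\ol v_1,\ol v_2$ to a single supersolution $\ol v=\ol v_1\wedge\ol v_2$. Patching the selections $\eta_i\subset f(\cdot,\ul v_i,v)$ into $\hat\eta_{\ul v}:=\eta_1$ on $\{\ul v_1\ge\ul v_2\}$ and $\eta_2$ on $\{\ul v_2>\ul v_1\}$ gives $\hat\eta_{\ul v}\subset f(\cdot,\ul v,v)$, while the components $k_i\preccurlyeq K_v$ are combined into some $\hat k_{\ul v}\preccurlyeq K_v$ using the modified transitivity of $\preccurlyeq$ (Proposition \ref{prop.transitive}, noting $K_v\preccurlyeq K_v$ by (K2)) together with (E4); dually one obtains $\hat\eta_{\ol v},\hat k_{\ol v}$. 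I then set
\[
g(x,s):=\begin{cases}\{\hat\eta_{\ol v}(x)\},& s>\ol v(x),\\ f(x,s,v(x)),& \ul v(x)\le s\le\ol v(x),\\ \{\hat\eta_{\ul v}(x)\},& s<\ul v(x),\end{cases}
\]
which by (F1), (F2), (F4) and $\ul v,\ol v\in[\ul u,\ol u]$ is upper Carath\'eodory, globally $L$-bounded, with non-empty compact convex values. By Lemma \ref{lem.G.pseudomonotone} the selection operator $G$ is bounded, pseudomonotone, with non-empty closed convex values; as $E$ is pseudomonotone and (E2)--(E3) make $E+G$ coercive (the bounded $G$ being of lower order), Theorem \ref{theorem.surjective.psm} with $M=\partial K_v$ (choosing $u_0$ in the domain of $\partial K_v$, so $M(u_0)\ne\emptyset$) yields $u\in W$ and $\eta\subset g(\cdot,u)$ with
\[
\langle Eu,w-u\rangle+\int_\Omega\eta(w-u)+K_v(w)-K_v(u)\ge0\qquad\text{for all }w\in W.
\]

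Next I would prove $\ul v\le u\le\ol v$. Put $\phi:=(\ul v-u)^+$, supported on $\{u<\ul v\}$, test the subsolution inequality for $\ul v$ with $\ul v\wedge u\in\ul v\wedge\mathcal D(K_v)$ (admissible since $u\in\mathcal D(K_v)$) and the displayed inequality with $u\vee\ul v$, and add. The $K$-terms are $\le0$ by $\hat k_{\ul v}\preccurlyeq K_v$, and on $\{u<\ul v\}$ the construction of $g$ forces $\eta=\hat\eta_{\ul v}$, so the $\eta$-integral vanishes, leaving $\langle Eu-E\ul v,\phi\rangle\ge0$. Since (E4) forces $\langle Eu-E\ul v,\phi\rangle\le0$, equality holds, and strict monotonicity of $E$ (as for the $p$-Laplacian) gives $\phi=0$, i.e. $\ul v\le u$; the bound $u\le\ol v$ is dual. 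On $[\ul v,\ol v]$ the truncation is inactive, hence $\eta\subset f(\cdot,u,v)$ and $u\in S(v)$, completing the main claim.

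The main obstacle is the non-monotone, merely upper semicontinuous dependence of $f$ on its second argument: a naive truncation leaves an uncontrolled sign in the $\eta$-integral of the comparison. The device that resolves it is to let $g$ equal the fixed sub/supersolution selections $\hat\eta_{\ul v},\hat\eta_{\ol v}$ outside $[\ul v,\ol v]$, which simultaneously restores upper semicontinuity (so that Lemma \ref{lem.G.pseudomonotone} applies) and annihilates the offending integral precisely where $\phi$ lives. A secondary technical point is the selection-patching in the sup/inf-of-sub/supersolution step and the appeal to strict monotonicity of $E$ to pass from $\langle Eu-E\ul v,(\ul v-u)^+\rangle=0$ to $\ul v\le u$.
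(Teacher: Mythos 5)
Your reduction of the two ``especially'' claims to the existence statement is fine (it matches how the paper exploits $\ul u \in \fix \ul S$, $\ol u \in \fix \ol S$ and permanence), and your overall scheme --- truncate $f$, solve an auxiliary inclusion via Theorem \ref{theorem.surjective.psm}, then compare --- is the paper's scheme. But there is a genuine gap at your first and central step: you assert that $\ul v = \ul v_1 \vee \ul v_2$ is itself a subsolution, i.e.\ satisfies an inequality of the form \eqref{eq.ulv} with the patched selection $\hat\eta_{\ul v}$ and some combined $\hat k_{\ul v} \preccurlyeq K_v$. Patching the selections is legitimate (the paper does exactly this to get $\ul\eta \in F(\ul v,v)$), but the variational inequality for $\ul v$ does not follow from anything you cite: Proposition \ref{prop.transitive} only chains $\preccurlyeq$-relations between given functionals and produces no combined $\hat k_{\ul v}$, and (E4) (mere monotonicity of $e$) gives no way to control $\langle E(\ul v_1\vee\ul v_2), w-\ul v\rangle$ by the two inequalities for $\ul v_1,\ul v_2$, whose admissible test functions moreover live in the different sets $\ul v_i\wedge\mathcal D(K_v)$. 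Whether the lattice supremum of two subsolutions of such a quasi-variational inclusion is again a subsolution is precisely the difficulty this theorem is designed to circumvent; the paper never makes that claim. Instead it compares the auxiliary solution $u$ with each $\ul v_i$ \emph{separately}, and there your ``annihilation'' of the $\eta$-integral breaks down: on $\{u<\ul v_i\}$ the truncation forces $\eta=\ul\eta$ (the patched selection), not $\ul\eta_i$, so the mismatch $\ul\eta-\ul\eta_i$ has no sign. Compensating it is the sole purpose of the piecewise-linear functions $\ul\theta_i,\ol\theta_i$, the operator $H$, and the inequalities \eqref{ineq.theta}, all of which your construction omits.

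There is a second gap of the same kind: to pass from $\langle Eu-E\ul v,(\ul v-u)^+\rangle=0$ to $(\ul v-u)^+=0$ you invoke strict monotonicity of $E$, which is not assumed --- (E4) only requires $(e(x,\xi)-e(x,\xi'))(\xi-\xi')\geq 0$, so $e$ may be flat in $\xi$. The paper needs no strictness because its auxiliary operator is $E+D+G-H+\partial K_v$, including the penalty operator $D$ built from the cut-off $d$: after the $E$-, $(\eta-h)$- and $K$-terms are discarded by sign, what survives is $\int_\Omega d(\cdot,u)(\ul v_i-u)^+\geq 0$, and since $d(\cdot,u)=-(\ul v-u)^{p-1}\leq -\bigl((\ul v_i-u)^+\bigr)^{p-1}$ on $\{u<\ul v_i\}$, this forces $\|(\ul v_i-u)^+\|_p=0$ outright. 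With your operator $E+G+\partial K_v$, both the comparison step and this final conclusion fail under the paper's hypotheses; you would be proving the theorem under strictly stronger assumptions on $e$ and with an unproven lattice property of subsolutions.
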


\begin{proof}
\textbf{Step 1: Auxiliary functions}~~~~~ We are going to define an auxiliary problem whose solutions will be the desired elements of $S(v)$. To this end, recall that, by definition, for $i=1,2$ there are $\ul \eta_i \in F(\ul v_i,v)$ and $\ul k_i \in \Gamma$ as well as $\ol \eta_i \in F(\ol v_i,v)$ and $\ol k_i \in \Gamma$ such that we have the relations
\begin{equation*}
\ul v_i \in \mathcal D(\ul k_i), \quad \ul k_i \preccurlyeq K_v \preccurlyeq \ol k_i, \quad\mathcal D(\ol k_i) \ni \ol v_i
\end{equation*}
and such that the following inequalities hold:
\begin{align}
\langle E\ul v_i, w-\ul v_i\rangle + \int_\Omega \ul \eta_i(w-\ul v_i) + \ul k_i(w) - \ul k_i(\ul v_i) \geq 0,\quad \text{for all }w\in \ul v_i\wedge \mathcal D(K_v),  \label{eq.ulv}\\
\langle E\ol v_i, w-\ol v_i\rangle + \int_\Omega \ol \eta_i(w-\ol v_i) + \ol k_i(w) - \ol k_i(\ol v_i) \geq 0,\quad \text{for all }w\in \ol v_i\wedge \mathcal D(K_v). \label{eq.olv}
\end{align}
Define the functions $\ul \eta, \ol \eta\colon \Omega \to \mathbb R$ pointwise a.e. by
\begin{equation*}
\ul\eta(x) := \begin{cases}
\ul\eta_1(x) &\text{ if }\ul v_1(x) \geq \ul v_2(x),\\ 
\ul\eta_2(x) &\text{ if }\ul v_1(x) < \ul v_2(x), 
\end{cases}\qquad
\ol\eta(x): = \begin{cases}
\ol\eta_1(x) &\text{ if }\ol v_1(x) \leq \ol v_2(x),\\ 
\ol\eta_2(x) &\text{ if }\ol v_1(x) > \ol v_2(x), 
\end{cases}
\end{equation*}
and note that $\ul \eta \in F(\ul v,v)$ and $\ol \eta \in F(\ol v,v)$. Further, we need three auxiliary functions $d$, $g$ and $h$, which will be introduced subsequently. 

\textit{First}, let us introduce the cut-off function
\begin{equation*}
d\colon\Omega \times \mathbb R\to\mathbb R,\quad d(x,s) := \begin{cases}
-(\ul v(x)-s)^{p-1}&\text{ if } s <\ul v(x),\\
0                  &\text{ if } \ul v(x) \leq s \leq \ol v(x),\\
(s-\ol v(x))^{p-1} &\text{ if } \ol v(x) < s.
\end{cases}
\end{equation*}
Obviously, $d$ is a Carath\'eodory function and satisfies the growth condition
\begin{equation}
\label{est.d.above}
|d(x,s)|\leq d_0\left(|\ul v(x)|^{p-1}+|s|^{p-1}+|\ol v(x)|^{p-1} \right)
\end{equation}
for some constant $d_0>0$.
Hence, the Nemytskij operator $v\mapsto d(\cdot,v)$ is known to be continuous and bounded from $L^p(\Omega)$ to its dual space. Thanks to the compact embedding $W\hookrightarrow L^p(\Omega)$ we conclude that the mapping $D\colon W\to W^\ast$, defined by 
\begin{equation*}
\langle Dv,w\rangle := \int_\Omega d(\cdot,v)w\quad\text{for all }w\in W,
\end{equation*}
is bounded and completely continuous, thus pseudomonotone. For further use, let us note that there are constants $d_1, d_2 > 0$ such that both $-(t-s)^{p-1}s$ for $s\leq t$ and $(s-t)^{p-1}s$ for $s\geq t$ are bounded from below by $d_1 |s|^p - d_2 |t|^{p-1}|s|$. One may take $d_1 = 1$ and $d_2 = 2^{2-p}$ if $p \leq 2$ and $d_1 = 2^{2-p}$ and $d_2 = 1$ if $p\geq 2$. By these estimates and by use of Young's inequality with epsilon, we obtain for all $v\in W$
\begin{equation}
\label{est.d.below}
\langle Dv,v\rangle 
\geq d_3 \|v\|_p^p -d_3(\norm{\ul v}_p^p + \norm{\ol v}_p^p),
\end{equation}
where $d_3 > 0$ is some constant not depending on $v$.

\textit{Second}, let us truncate $f$ to define the multifunction
\begin{equation*}
g\colon \Omega\times\mathbb R\to \mathcal P(\mathbb R),\quad g(x,s) := \begin{cases}
\{\ul \eta(x)\,&\text{ if } s <\ul v(x),\\
f(x,s,v(x))                  &\text{ if } \ul v(x) \leq s \leq \ol v(x),\\
\{\ol\eta(x)\} &\text{ if } \ol v(x) < s.
\end{cases}
\end{equation*}
From (F1) and (F2) we have that $(x,s)\mapsto f(x,s,v(x))$ is upper Carath\'eodory and it is readily seen (by invoking $\ul \eta \subset f(\cdot,\ul v,v)$ and $\ol \eta \subset f(\cdot,\ol v,v)$) that $g$ is upper Carath\'eodory, too. Since $g$ has only non-empty, closed and convex values, and due to (F4) and Lemma \ref{lem.G.pseudomonotone}, the mapping 
\begin{equation*}
G\colon W \to \mathcal P(W^\ast),\quad G(u) := \{i_q^\ast \eta: \eta \textrm{ is a measurable selection of } g(\cdot,u)\}
\end{equation*}
is well-defined, pseudomonotone, bounded and its values are non-empty, closed and convex.

\textit{Third}, in order to define $h$, let us introduce the following notation: For any real numbers $x_1 < x_2$ and $y_1, y_2$, denote by
\begin{equation*}
l=\big[(x_1,y_1) \rightsquigarrow (x_2,y_2)\big]
\end{equation*}
the continuous piecewise linear function $l\colon \mathbb R \to \mathbb R$ that satisfies $l(x) = y_1$ for $x\leq x_1$, $l'(x) = (y_2-y_2)/(x_2-x_1)$ for $x_1 < x < x_2$, and $l(x) = y_2$ for $x\geq x_2$. 

This said, for $i=1,2$ we introduce the functions
\begin{align*}
\ul \theta_i \colon \Omega \times \mathbb R \to \mathbb R,\quad \ul \theta_i(x,\cdot) := \big[\big(\ul v_i(x),\ul \eta(x) - \ul \eta_i(x)\big) \rightsquigarrow \big(\ul v(x),0\big)\big],\\
\ol \theta_i \colon \Omega \times \mathbb R \to \mathbb R,\quad \ol \theta_i(x,\cdot) := \big[\big(\ol v(x),0\big) \rightsquigarrow \big(\ol v_i(x),\ol \eta_i(x) -\ol \eta(x)\big)\big].
\end{align*}
It is easy to check that $\ul \theta_i$ and $\ol \theta_i$ are measurable in the first argument and, of course, continuous in the second. Let us combine them to form the Carath\'eodory function
\begin{equation*}
h\colon \Omega\times \mathbb R\to\mathbb R,\quad h(x,s) := |\ul\theta_1(x,s)| + |\ul\theta_2(x,s)| - |\ol\theta_1(x,s)| - |\ol\theta_2(x,s)|.
\end{equation*}

We have constructed $h$ in such a way that, for all $u\in L^0(\Omega)$, $i=1,2$, the inequalities 
\begin{equation}
\label{ineq.theta}
\ul\eta - \ul\eta_i - h(\cdot,u) \leq 0\quad \text{on }\{u<\ul v_i\},\qquad \ol\eta_i - \ol\eta + h(\cdot,u) \leq 0\quad \text{on }\{\ol v_i < u\}
\end{equation}
hold true. Indeed, if, e.g., $v(x) < \ul v_i(x) \leq \ul v(x)$ for some $x\in\Omega$, then we obtain per definition $\ul\theta_i(x,v(x)) = \ul \eta(x) - \ul \eta_i(x)$ and $\ol \theta_i(x,v(x)) = 0$, $i=1,2$, which implies the first inequality in (\ref{ineq.theta}). Moreover, $h(x,s) = 0$ if $\ul v(x) \leq s \leq \ol v(x)$.

Further, $h(x,\cdot)$ is obviously bounded by some function $\eta\in L$, so that the Nemytskij operator $v\mapsto h(\cdot, v)$ is known to be a continuous and bounded mapping from $L$ to $L^\ast$. Thanks to the compact embedding $W\hookrightarrow L$ we conclude that the composed mapping $H\colon W\to W^\ast$, defined by
\begin{equation*}
\langle Hv,w\rangle :=\int_\Omega h(\cdot,v)w\quad\text{for all }w\in W,
\end{equation*}
is bounded and completely continuous, thus pseudomonotone.\\

\noindent\textbf{Step 2: Solutions of auxiliary problem}~~~~~ Let us consider the multi-valued mapping
\begin{equation*}
E+D+G-H+\partial K_v\colon W \to \mathcal P(W^\ast).
\end{equation*}
It is the sum of the bounded, pseudomonotone single-valued mappings $E$, $D$ and $-H$, the bounded, pseudomonotone mapping $G$ with non-empty, closed and convex values, and the maximal monotone mapping $\partial K_v$. Further, $E$ is coercive with respect to any $u_0\in \mathcal D(\partial K_v)$, since by (E2)---(E3) we have for all $u,u_0\in W$
\begin{equation}
\label{eq.coer.1}
\langle Eu, u-u_0\rangle 
\geq \alpha_2\norm{\nabla u}_p^p - \norm{a_2}_1 - (\alpha_3 \norm{\nabla u}_p^{p-1} + \norm{a_3}_{p'})\norm{\nabla u_0}_p,
\end{equation}
which implies
\begin{equation*}
    \frac{\langle Eu,u-u_0\rangle}{\|u\|_W} \to \infty\quad\text{as}\quad \|u\|_W \to \infty
\end{equation*}
(note again that $\|\nabla u\|_p$ defines an equivalent norm on $W$).
Further, for all $u,u_0\in W$ and any $\eta\subset g(\cdot,u)$ we have, since the selections of $g(\cdot,u)$ and $Hu$ are uniformly bounded,
\begin{equation}
\label{eq.coer.2}
\langle i^\ast\eta - Hu,u-u_0\rangle \geq -\norm{\eta-Hu}_L\norm{u-u_0}_{L'} \geq -c_0 \norm{u}_W -c_1
\end{equation}
for some constants $c_0,c_1 > 0$. In addition, estimates (\ref{est.d.above}) and (\ref{est.d.below}) imply 
\begin{equation}
\label{eq.coer.3}
\langle Du,u-u_0\rangle\geq - d_3 (\norm{\ul v}_p^p + \norm{\ol v}_p^p) - d_0(\norm{\ul v}_p^{p-1}+\norm{u}_p^{p-1} + \norm{\ol v}_p^{p-1}) \geq d_4 - d_5 \norm{u}_W^{p-1}
\end{equation}
for constants $d_4, d_5 > 0$. From \eqref{eq.coer.1}, \eqref{eq.coer.2} and \eqref{eq.coer.3} it follows that $E+D+G-H\colon W \to \mathcal P(W^\ast)$ is coercive with respect to $u_0$.

Taken together, all conditions of Theorem \ref{theorem.surjective.psm} are fulfilled, and thus there is some $u\in W$ such that $(E+D+G-H+\partial K_v)(u) \ni 0$. By definition of the subdifferential this implies $u\in \mathcal D(K_v)$ and that there is $\eta\subset g(\cdot,u)$ such that, for all $w\in W$,
\begin{equation}
\label{eq.aux.1}
\langle Eu,w-u\rangle +  \int_\Omega d(\cdot,u)(w-u) + \int_\Omega \left(\eta - h(\cdot,u)\right) (w-u) + K_v(w) - K_v(u) \geq 0. 
\end{equation}
\\
\textbf{Step 3: Solution of the QVIP} ~~~~~ In this last step, let us check that for any $u\in W$ and $\eta \subset g(\cdot,u)$ with (\ref{eq.aux.1}) we have $\ul v \leq u \leq \ol v$. To this end, for $i=1,2$, take 
\begin{equation*}
w = \ul v_i - (\ul v_i - u)^+ = \ul v_i \wedge u \in \ul v_i \wedge \mathcal D(K_v)
\end{equation*}
as test function in (\ref{eq.ulv}), take 
\begin{equation*}
w = u + (\ul v_i - u)^+ = \ul v_i\vee u \in \mathcal D(\ul k_i) \vee \mathcal D(K_v) \subset  \mathcal D(K_v)
\end{equation*}
as test function in (\ref{eq.aux.1}), and add the resulting inequalities to obtain
\begin{equation}
\label{eq.usolution.1}
\begin{split}
\langle Eu-E\ul v_i,(\ul v_i - u)^+\rangle + \int_\Omega d(\cdot,u)(\ul v_i - u)^+ + \int_\Omega \left(\eta -\ul \eta_i - h(\cdot,u)\right) (\ul v_i - u)^+ &\\
+ K_v(\ul v_i \vee u) - K_v(u) + \ul k_i(\ul v_i \wedge u) - \ul k_i(\ul v_i) &\geq 0. 
\end{split}
\end{equation}
By (E4) and the identity $\nabla (\ul v_i - u)^+ = \chi_{\{\ul v_i \geq u\}}\nabla (\ul v_i - u)$ (where $\chi_M$ denotes the indicator function of a set $M$) we deduce
\begin{equation}
\label{eq.usolution.2}
\langle Eu-E\ul v_i,(\ul v_i - u)^+\rangle \leq 0.
\end{equation}
Further, it follows from \eqref{ineq.theta}
\begin{equation}
\label{eq.usolution.3}
\int_\Omega \left(\eta -\ul \eta_i - h(\cdot,u)\right) (\ul v_i - u)^+ \leq 0,
\end{equation}
and $\ul v_i \in \mathcal D(\ul k_i)$, $u \in \mathcal D(K_v)$ and $\ul k_i \preccurlyeq K_v$ imply
\begin{equation}
\label{eq.usolution.4}
 K_v(\ul v_i \vee u) - K_v(u) + \ul k_i(\ul v_i \wedge u) - \ul k_i(\ul v_i) \leq 0.
\end{equation}
Combining \eqref{eq.usolution.1}---\eqref{eq.usolution.4}, we deduce
\begin{equation}
\label{eq.usolution.5}
\int_\Omega d(\cdot,u)(\ul v_i - u)^+ \geq 0.
\end{equation}
By definition of $d$, (\ref{eq.usolution.5}) implies $\norm{(\ul v_i - u)^+}_p^p \leq 0$, which in turn implies $\ul v_i \leq u$, $i= 1,2$. Since $\ul v =\ul v_1 \vee \ul v_2$, we have $\ul v \leq u$. 

In a similar way we conclude $u\leq \ol v$ by taking $w= \ol v_i + (u-\ol v_i)^+ = \ol v_i \vee u$ as test function in (\ref{eq.olv}) and $w=u - (u-\ol v_i)^+ = u\wedge \ol v_i$ as test function in (\ref{eq.aux.1}).

In view of $\ul v \leq u \leq \ol v$, (\ref{eq.aux.1}) reduces to
\begin{equation*}
\langle Eu,w-u\rangle + \int_\Omega \eta (w-u) + K_v(w) - K_v(u) \geq 0\quad\text{for all }w\in W, 
\end{equation*}
and since $\eta \subset g(\cdot,u) = f(\cdot,u,v)$, we have $u\in S(v)$.
\end{proof}

Finally, existence of greatest solutions follows from Theorem \ref{thm.greatest.fixedpoint}, and since all results remain true if we consider the dual order $\geq$, we obtain also a smallest solution within the order interval $[\ul u,\ol u]$:

\begin{theorem}
Suppose (S), (E1)---(E4), (F1)---(F5) and (K1)---(K3). Then Problem \eqref{eq.problem} has both a smallest and a greatest solution in $[\underline u,\overline u]$.
\end{theorem}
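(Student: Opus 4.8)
The plan is to check the four hypotheses of Theorem~\ref{thm.greatest.fixedpoint} for $D=[\underline u,\overline u]$ (viewed in $V$), for the solution operator $S$ of~\eqref{eq.problem}, and for the subsolution operator $\underline S$, and then to read off the greatest solution from that theorem and the smallest one from its dual. Three of the four conditions are already in hand. Condition~(i) holds because $D$ is an order interval in the distributive lattice $V$, hence an upper semilattice, and is weakly compact in $V$ by the discussion in the setting. Condition~(ii) is precisely the proposition stating that every value of $S$ is weakly compact, hence weakly closed, in $W$. Condition~(iv) follows from~(S): the subsolution satisfies $\underline u\in\fix\underline S\subset\sub\underline S$, so $\underline u\leq^\ast\underline S(\underline u)$.

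The remaining work is condition~(iii). Permanence of $\underline S$ upward and the inclusion $S(v)\subset\underline S(v)$ are the two propositions proved above, and $\underline S(v)\leq^\ast S(v)$ is part of the conclusion of Theorem~\ref{theorem.aux.existence}; what is left is to see that every value $\underline S(v)$ is directed upward. I would obtain this from the full strength of Theorem~\ref{theorem.aux.existence}: given $a,b\in\underline S(v)$, apply it with $\underline v_1=a$, $\underline v_2=b$ and $\overline v_1=\overline v_2=\overline u$. The ordering requirement $\underline v=a\vee b\leq\overline u=\overline v$ holds since $a,b\in D$, and one needs $\overline u\in\overline S(v)$, which follows from the dual permanence of $\overline S$ together with $\overline u\in\fix\overline S$ (from~(S)) and $v\leq\overline u$. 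The theorem then yields $u\in S(v)\subset\underline S(v)$ with $\{a,b\}\leq^\ast u$, so $\underline S(v)$ is directed upward.

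With (i)--(iv) verified, Theorem~\ref{thm.greatest.fixedpoint} provides a greatest fixed point $u^\ast$ of $S$ with $\underline u\leq u^\ast$. Since a fixed point of the $D$-restricted solution operator is exactly a solution of (QVIP$_{u^\ast}$), i.e. of~\eqref{eq.problem}, lying in $[\underline u,\overline u]$, this $u^\ast$ is the greatest solution. For the smallest solution I would appeal to the remark that every result remains valid for the dual order $\geq$: the symmetric data $K_v^\uparrow$, $\overline T$ and $\overline S$ satisfy the dual of each property established for $K_v^\downarrow$, $\underline T$ and $\underline S$, so applying Theorem~\ref{thm.greatest.fixedpoint} in $(V,\geq)$ with $\overline u\in\fix\overline S$ returns a smallest fixed point of $S$ in $[\underline u,\overline u]$.

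The genuine obstacle is not this final bookkeeping but the comparison-and-directedness input packaged in Theorem~\ref{theorem.aux.existence}, whose truncation machinery (the cut-off $d$, the truncated multifunction $g$, and the corrector $h$ tuned so that~\eqref{ineq.theta} holds) is what forces the auxiliary solution back into $[\underline v,\overline v]$. At the level of the present assembly the only point demanding care is ensuring $\overline u\in\overline S(v)$ so that Theorem~\ref{theorem.aux.existence} can be invoked in the direction needed for directedness of $\underline S(v)$, and verifying that the passage to the dual order genuinely interchanges the roles of all hypotheses without weakening any of them.
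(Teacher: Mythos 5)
Your proposal is correct and takes essentially the same route as the paper: the paper's own proof is exactly this one-line assembly of Theorem \ref{thm.greatest.fixedpoint} from the propositions of Section \ref{sec.application} (compactness of the values of $S$, permanence and the inclusion $S(v)\subset \ul S(v)$, and Theorem \ref{theorem.aux.existence}), followed by passage to the dual order for the smallest solution. You even make explicit two details the paper leaves implicit, namely that condition (iii) requires directedness of $\ul S(v)$ rather than of $S(v)$ (obtained by applying Theorem \ref{theorem.aux.existence} to pairs in $\ul S(v)$ with $\ol v_1=\ol v_2=\ol u$) and that $\ol u\in\ol S(v)$ follows from $\ol u\in\fix\ol S$ together with the dual permanence of $\ol S$.
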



\printbibliography

\end{document}